\title{\LARGE Convergence Rates of Two-Time-Scale Gradient Descent-Ascent Dynamics for Solving Nonconvex Min-Max Problems}
\author{
Thinh T. Doan\thanks{Thinh T. Doan is with the Bradley Department of Electrical and Computer Engineering, Virginia Tech, USA. Email: {\tt\small thinhdoan@vt.edu}}
}
\newcommand{\Rset}{\mathbb{R}}
\newcommand{\Ocal}{{\cal O}}
\newcommand{\Pcal}{{\cal P}}
\newcommand{\Ycal}{{\cal Y}}
\newcommand{\Ibf}{{\bf I}}
\newcommand{\xdot}{\dot{x}}
\newcommand{\ydot}{\dot{y}}
\newtheorem{lem}{Lemma}
\newtheorem{thm}{Theorem}
\newtheorem{definition}{Definition}
\newtheorem{assump}{Assumption}
\date{}
\begin{document}

\maketitle

\begin{abstract}%
There are much recent interests in solving noncovnex min-max optimization problems due to its broad applications in many areas including machine learning, networked resource allocations, and distributed optimization. Perhaps, the most popular first-order method in solving min-max optimization is the so-called simultaneous (or single-loop) gradient descent-ascent algorithm due to its simplicity in implementation. However, theoretical guarantees on the convergence of this algorithm is very sparse since it can diverge even in a simple bilinear problem. 

In this paper, our focus is to characterize the finite-time performance (or convergence rates) of the continuous-time variant of simultaneous gradient descent-ascent algorithm. In particular, we derive the rates of convergence of this method under a number of different conditions on the underlying objective function, namely, two-sided Polyak-\L ojasiewicz (P\L), one-sided P\L, nonconvex-strongly concave, and strongly convex-nonconcave conditions. Our convergence results improve the ones in prior works under the same conditions of objective functions. The key idea in our analysis is to use the classic singular perturbation theory and coupling Lyapunov functions to address the time-scale difference and interactions between the gradient descent and ascent dynamics. Our results on the behavior of continuous-time algorithm may be used to enhance the convergence properties of its discrete-time counterpart.   
\end{abstract}



\section{Introduction}
In this paper, we consider the following min-max optimization problems 
\begin{align}
    \min_{x\in\Rset^{m}}\max_{y\in\Rset^{n}}f(x,y),\label{prob:minmax} 
\end{align}
where $f:\Rset^{m}\times \Rset^{n}\rightarrow\Rset$ is a nonconvex function w.r.t $x$ for a fixed $y$ and (possibly) nonconcave w.r.t $y$ for a fixed $x$. The min-max problem has received much interests for years due to its broad applications in different areas including control, machine learning, and economics. In particular, many problems in these areas can be formulated as problem \eqref{prob:minmax}, for example, game theory \cite{Basar_book_gametheory_1998,Shapley1953StochasticG}, stochastic control and reinforcement learning \cite{altman1999constrained,achiam17a}, training generative adversarial networks (GANs) \cite{Goodfellow_GAN_2020,Mescheder2017}, adversarial and robust machine learning \cite{KurakinGB17,Qian_Zhu_Tang_Jin_Sun_Li_2019}, resource allocation over networks \cite{6450111}, and distributed optimization \cite{Lan2020_DecentralizedOpt,9085431}; to name just a few.      

In the existing literature, there are two types of iterative first-order methods for solving problem \eqref{prob:minmax}, namely, nested-loop algorithms and single-loop algorithms. Nested-loop algorithms implement multiple inner steps in each iteration to solve the maximization problem either exactly or approximately. However, this approach is not applicable to the setting when $f(x,y)$ is nonconcave in $y$, since the maximization problem is NP-hard. Only finding a stationary point of the maximization problem is likely to affect the quality of solving the minimization problem.          

On the other hand, single-loop algorithm simultaneously updates the iterates $x$ and $y$ by using the vanilla gradient descent and ascent steps at different time scales, respectively. As a result, this algorithm is applicable to more general settings and more practical due to its simplicity in implementation. However, single-loop algorithms may not converge in many settings, for example, they fail to converge even in a simple bilinear zero-sum game \cite{balduzzi18a}. Indeed, theoretical guarantees of these methods are very sparse. 

Our focus in this paper is to study the continuous-time variant of the single-loop gradient descent-ascent method for solving problem \eqref{prob:minmax}. Considering the continuous-time variant will help us to have a better understanding about the behavior of this method through studying the convergence of the corresponding differential equations using Lyapunov theory. Such an understanding can then be used to enhance the analysis of the discrete-time algorithms, as recently observed in the single objective optimization counterpart \cite{Krichene_NIPS2015,6426639,Weijie_NeurIPS2014,Diakonikolas_2019}. Our main contributions are summarized below.\vspace{0.1cm}

\noindent\textbf{Main Contributions.} The focus of this paper is to study the performance of the continuous-time gradient descent-ascent dynamics in solving nonconvex min-max optimization problems. In particular, we derive the rates of convergence of this method under a number of different conditions on the underlying objective function, namely, two-sided Polyak-\L ojasiewicz (P\L), one-sided P\L, nonconvex-strongly concave, and strongly convex-nonconcave conditions. These rates are summarized in Table \ref{table:comparison} and presented in detail in Section \ref{sec:results}, where we show that our results improve the ones in prior works under the same conditions of objective functions. The key idea in our analysis is to use the classic singular perturbation theory and coupling Lyapunov function of the fast and slow dynamics to address the time-scale difference and interactions between the gradient descent and ascent dynamics. Proper choices of step sizes allows us to derive improved convergence properties of the two-time-scale gradient descent-ascent dynamics.    

\subsection{Related Works}\label{subsec:related_works}

\paragraph{Convex-Concave Settings.} 
Given the broad applications of problem \eqref{prob:minmax}, there are a large number of works to study algorithms and their convergence in solving this problem, especially in the context of convex-concave settings. Some examples include prox-method and its variant \cite{Nemirovski_SIAM2004,Malitsky_SIAM2015,Wang_NEURIPS2020, Cherukuri_2017,TANG2020104754}, extragradient and optimistic gradient methods \cite{Korpelevich1976,Mokhtari_SIAM2020,Monteiro_SIAM2010,Golowich20a,yoon21d,Dang2015OnTC}, and recently Hamiltonian gradient descent methods \cite{Mescheder2017,balduzzi18a,abernethy21a}. Some algorithms in these settings have convergence rates matched with the lower bound complexity; see the recent work \cite{yoon21d} for a detailed discussion.

\paragraph{Nonconvex-Concave Settings.} Unlike the convex-concave settings, algorithmic development and theoretical understanding in the general nonconvex settings are very limited. Indeed, finding the global optimality of nonconvex-nonconcave problem is NP-hard, or at least as hard as solving a single nonconvex objective problem. As a result, the existing literature often aims to find a stationary point of $f$ when the max problem is concave. For example, multiple-loop algorithms have been studied in \cite{Thekumparampil_NEURIPS2019,Kong_SIAM2021,Rafique_OMS2021,lin20b,Nouiehed_NEURIPS2019}. Our work in this paper is closely related to the recent literature on studying single-loop algorithm \cite{lin20a,9070155,Yang_NEURIPS2020,Xu2020AUS,Zhang_NEURIPS2020}. While these works study discrete-time algorithms, we consider continuous-time counterpart. We will show that for some settings, our approach improves the existing convergence results.

\paragraph{Other Settings.} We also want to mention some related literature in game theory \cite{loizou20a,Zhang_NEURIPS2019,Cen2021FastPE,perolat18a,Daskalakis_Independent_NEURIPS2020}, two-time-scale stochastic approximation \cite{borkar2008,KondaT2004, Dalal_Szorenyi_Thoppe_2020, DoanR2019,GuptaSY2019_twoscale,Doan_two_time_SA2019,Kaledin_two_time_SA2020,MokkademP2006,doan2021finite,doan2020nonlinear}, reinforcement learning \cite{borkar2005actor,bhatnagar2012online,Paternain_NEURIPS2019,ding2020natural,Zeng_CMDP_2021}, two-time-scale optimization \cite{WangFL2017,Zeng_TwoTimeScaleOpt_2021}, and decentralized optimization \cite{6461383,8786146,Doan_random_quantization2020,Doan_delays_2017,Dutta_2021,Dutta_Decentralized_Opt2021,Vancocelos_CDC2021}. These works study different variants of two-time-scale methods mostly for solving a single optimization problem, and often aim to find global optimality (or fixed points) using different structure of the underlying problems (e.g., Markov structure in stochastic games and reinforcement learning or strong monotonicity in stochastic approximation). As a result, their techniques may not be applicable to the context of problem \eqref{prob:minmax} considered in the current paper.\vspace{0.1cm}

\noindent\textbf{Notation.} Given any vector $x$ we use $\|x\|$ to denote its $2$-norm. We denote by $\nabla_{x} f(x,y)$ and $\nabla_{y} f(x,y)$ the partial gradients of $f$ with respect to $x$ and $y$, respectively.


\section{Two-Time-Scale Gradient Descent-Ascent Dynamics}
For solving problem \eqref{prob:minmax}, we are interested in studying two-time-scale gradient descent-ascent dynamics (GDAD), where we implement simultaneously the following two differential equations
\begin{align}
\begin{aligned}
\xdot(t) &= \frac{d}{dt}x(t) = -\alpha\nabla_{x} f(x(t),y(t)),\\   
\ydot(t) &= \frac{d}{dt}y(t) = \beta\nabla_{y} f(x(t),y(t)),   
\end{aligned}    \label{alg:GDAD}
\end{align}
Here, $\alpha,\beta$ are two step sizes, whose values will be specified later. In the convex-concave setting, one can choose $\alpha=\beta$. However, as observed in  \cite{Heusel_NIPS2017}, choosing different step sizes achieves a better convergence in the context of nonconvex problem. Indeed, we will choose $\alpha \ll \beta$ since in our settings studied in the following sections, the maximization problem is often easier to solve than the minimization problem. In this case, the dynamic of $y(t)$ is implemented at a faster time scale (using larger step sizes) than $x(t)$ (using smaller step sizes). The time-scale difference is loosely defined as the ratio $\alpha/\beta \ll 1$. Thus, one has to design these two step sizes properly so that the method converges as fast as possible.\vspace{0.1cm}    

\noindent\textbf{Technical Approach.} The convergence analysis of \eqref{alg:GDAD} studied in this paper is mainly motivated by the classic singular perturbation theory \cite{Kokotovic_SP1999}. The main idea of our approach can be explained as follows. Since $y$ is implemented at a faster time scale than $x$, one can consider $x(t) = x$ being fixed in $\ydot$ and separately study the stability of the system $\ydot$ using Lyapunov theory. Let $V_{2}$ be the Lyapunov function corresponding to $\ydot$. When $\ydot$ converges to an equilibrium $y$ (e.g., $\nabla _{y}f(x,y) = 0$), one can fix $y(t) = y$ and study the stability of $\xdot$. Let $V_{1}$ be the  corresponding Lyapunov function of $\xdot$. We note that $V_{1}$ and $V_{2}$ both depend on $x$ and $y$, as a result, their time derivatives are coupled through the dynamics in \eqref{alg:GDAD}. Addressing this coupling and the time-scale difference between the two dynamics is the key idea in our approach. To do that, we will consider the following Lyapunov function
\begin{align}
V(x,y) = V_{1}(x,y) + \frac{\gamma\alpha}{\beta} V_{2}(x,y),\label{def:Lyapunov_V}  
\end{align}
where $\alpha/\beta$ represents the time-scale difference, while the constant $\gamma$ will be properly chosen to eliminate the impact of $x$ on the convergence of $y$ and vice versa. Proper choices of these constants will also help us to derive the convergence rates of \eqref{alg:GDAD}. Similar approach has been used in different settings of two-time-scale methods, see for example \cite{doan2020nonlinear,Dutta_Decentralized_Opt2021}.

We conclude this section by introducing two assumptions for our analysis studied later.


\begin{assump}\label{assump:Lipschitz_cont}
The function $f(\cdot,\cdot)$ has Lipschitz continuous gradients for each variable, i.e., there exist positive constants $L_{x}$, $L_{y}$, and $L_{xy}$ such that for all $x_{1},x_{2}\in\Rset^{m},y_{1},y_{2}\in\Rset^{n}$ we have
\begin{align}
    \begin{aligned}
\|\nabla_{x} f(x_{1},y_{1}) - \nabla_{x}f(x_{2},y_{2})\| &\leq L_{x}\|x_{1}-x_{2}\| + L_{xy}\|y_{1}-y_{2}\|,\\ 
\|\nabla_{y} f(x_{1},y_{1}) - \nabla_{y}f(x_{2},y_{2})\| &\leq L_{xy}\|x_{1}-x_{2}\| + L_{y}\|y_{1}-y_{2}\|.    \end{aligned}
\end{align}\label{assump:Lipschitz_cont:ineq}
\end{assump}
\begin{assump}\label{assump:max}
Given any $x$ the problem $\max_{y}f(x,y)$ has a nonempty solution set $\Ycal^{\star}(x)$, i.e., there exists $y^{\star}(x)\in\Ycal^{\star}(x)$ such that
\begin{align*}
y^{\star}(x) = \arg\max_{y\in\Rset^{n}}f(x,y),\quad \text{where } f(x,y^{\star}(x)) \text{ is finite}. 
\end{align*}
\end{assump}\vspace{-0.8cm}

\begin{table}[h]
\centering
\caption{Convergence rates of GDAD for solving \eqref{prob:minmax} given some accuracy $\epsilon > 0$. The abbreviations NCvex, NCave, SCvex, SCave, and P\L\ stand for nonconvex, nonconcave, strongly convex, strongly concave, and Polyak-\L ojasiewicz condtions, respectively. Condition number $\kappa$ is defined in \eqref{def:condition_number}, and $R$ is the size of compact set used in \cite{Nouiehed_NEURIPS2019}.}\vspace{0.3cm}
\begin{small}
\begin{sc}
\begin{tabular}{l|c|c}
\toprule
\textbf{Objectives} & \text{Prior Works} & This Paper \\
\midrule
\textbf{P\L \& P\L} & $\Ocal\left(\kappa^{3}\log(\frac{1}{\epsilon})\right)$ {\tiny\cite{Yang_NEURIPS2020}} & $\Ocal\left(\kappa^{2}\log(\frac{1}{\epsilon}\right)$\\
\hline
\textbf{NCvex \& P\L} & $\Ocal\left(R^2L_{xy}\log(\frac{1}{\epsilon})\epsilon^{-2}\right)$ {\tiny  \cite{Nouiehed_NEURIPS2019}}  & $\Ocal\left(L_{xy}^2\epsilon^{-2}\right)$\\
\hline
\textbf{NCvex \& SCave}  & $\Ocal\left(L_{xy}^2\epsilon^{-2}\right)$ {\tiny \cite{Xu2020AUS}} & $\Ocal\left(L_{xy}^2\epsilon^{-2}\right)$\\
\hline
\textbf{SCvex \& NCave}  & $\Ocal\left(L_{xy}^2\epsilon^{-2}\right)$ {\tiny \cite{Xu2020AUS}} & $\Ocal\left(L_{xy}^2\epsilon^{-2}\right)$\\
\bottomrule
\end{tabular}\vspace{-0.2cm}
\end{sc}
\end{small}
\label{table:comparison}
\end{table}
\section{Main Results}\label{sec:results}
In this section, we present the main results of this paper, where we derive the convergence rates of GDAD under different conditions on the objective function $f(x,y)$. Our results are summarized in Table \ref{table:comparison}. First, our approach improves the analysis in \cite{Yang_NEURIPS2020}, where we show in Section \ref{subsec:2-sided-PL} that for two-sided P\L\ functions the convergence of GDAD only scales with $\kappa^2$ instead of $\kappa^3$ studied in \cite{Yang_NEURIPS2020}. Our result addresses the conjecture raised in \cite{Yang_NEURIPS2020}, where the authors state that such an improvement may not be possible. Second, our analysis achieves a better result than the one in \cite{Nouiehed_NEURIPS2019} for the case of one-sided PL function by a factor of $\log(1/\epsilon)$. We note that a nested-loop is studied in \cite{Nouiehed_NEURIPS2019} while GDAD is a single-loop method. Finally, our result is the same as the one in \cite{Xu2020AUS} when $f(x,y)$ is either strongly concave in $y$ for fixed $x$. In Section \ref{subsec:stronglyconvex}, we will show that this observation also holds when $f(x,y)$ is either strongly convex in $x$ and nonconcave in $y$. Note that as compared to the analysis in \cite{Xu2020AUS}, we use a simpler analysis and simpler choice of step sizes to achieve these results.


\subsection{Two-Sided Polyak--\L ojasiewicz Conditions}\label{subsec:2-sided-PL}
We first study the convergence rates of GDAD when $f$ satisfies a two-sided Polyak--\L ojasiewicz (P\L) condition, which is considered in \cite{Yang_NEURIPS2020} and stated here for convenience.  

\begin{definition}[Two-Sided P\L\ Conditions]\label{def:two-sided-PL}
A continuously differentiable function $f:\Rset^{m}\times\Rset^{n}\rightarrow\Rset$ is called to satisfy two-sided P\L\ conditions if there exist two positive constants $\mu_{x}$ and $\mu_{y}$ such that $\mu_{x},\mu_{y} \leq \min\{L_{x},L_{y},L_{xy}\}$ the following conditions hold for all $(x,y)\in\Rset^{m}\times\Rset^{n}$:
\begin{align}
\begin{aligned}
2\mu_{x}[f(x,y) - \min_{x}f(x,y)] &\leq \|\nabla_{x}f(x,y)\|^2,\\
2\mu_{y}[\max_{y}f(x,y) -  f(x,y)] &\leq \|\nabla_{y}f(x,y)\|^2.
\end{aligned}\label{def:two-sided-PL:ineq}
\end{align}
\end{definition}
The two-sided P\L\ condition, which we will assume to hold in this subsection, is a generalized variant of the popular P\L\ condition, proposed by \cite{Polyak_PL1963} as a sufficient condition to guarantee that the classic gradient descent method converges exponentially to the optimal value of an unconstrained minimization problem. As shown in \cite{Karimi_PL_2016}, the P\L\ condition also implies the quadratic growth condition, i.e., given any $x$ we have
\begin{align}
\max_{z\in\Rset^{m}}f(x,z) - f(x,y) \geq \frac{\mu_{y}}{2}\|\Pcal_{\Ycal^{\star}(x)}[y]-y\|^2,\quad \forall y\in\Rset^{m},\label{eq:QG}   
\end{align}
where we assume that $\Ycal^{\star}(x)$ is a nonempty solution set of $\max_{y}f(x,y)$ and $\Pcal_{\Ycal^{\star}(x)}[y]$ is the projection of $y$ to this set. More discussions on P\L\ condition can be found in \cite{Karimi_PL_2016}, while some examples of functions satisfying the two-sided P\L\ condition are given in \cite{Yang_NEURIPS2020}.

Our focus in this section is to show that GDAD converges exponentially to the global min-max solution $(x^{\star},y^{\star})$ of $f$ under the two-sided P\L\ condition. To do that, we consider the following assumption and lemmas, which are useful for our analysis considered later. We first consider an assumption on the existence of $(x^{\star},y^{\star})$, a global min-max solution of $f$.

\begin{assump}\label{assump:global_minmax}
There exists a global min-max solution $(x^{\star},y^{\star})$ of $f$, i.e., 
\begin{align*}
x^{\star} = \arg\min_{x\in\Rset^{m}}f(x,y^{\star})\quad \text{and}\quad y^{\star} = \arg\max_{y\in\Rset^{n}}f(x^{\star},y).    
\end{align*}
\end{assump}
Next, we consider the following lemma about the Lipschitz continuity of the gradient of $f(x,y^{\star}(x))$, which is a variant of the well-known Danskin lemma \cite{Bertsekas1999}[Proposition B.25] and studied in \cite{Nouiehed_NEURIPS2019}[Lemma A.5].  
\begin{lem}\label{lem:Nouiehed_NEURIPS2019}
Suppose that Assumptions \ref{assump:Lipschitz_cont}-- \ref{assump:global_minmax} hold. Then, the function $\max_{y}f(x,y)$ is differentiable and its gradient $\nabla_{x}f(x,y^{\star}(x))$ is Lipschitz continuous with a constant $L_{x} + \frac{L_{xy}}{\mu_{y}}$.
\end{lem}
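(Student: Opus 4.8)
The plan is to write $\Phi(x) := \max_{y} f(x,y) = f(x, y^{\star}(x))$ and to establish two facts: that $\Phi$ is differentiable with $\nabla\Phi(x) = \nabla_{x} f(x, y^{\star}(x))$, and that this gradient is Lipschitz. For the first part I would invoke the Danskin-type argument alluded to in the statement: Assumption \ref{assump:max} guarantees a nonempty maximizer set $\Ycal^{\star}(x)$, and the envelope/Danskin theorem identifies the subdifferential of $\Phi$ with the convex hull of $\{\nabla_{x} f(x,y^{\star}) : y^{\star}\in\Ycal^{\star}(x)\}$; differentiability then amounts to this set being a singleton, which I would obtain from the structure enforced by the P\L\ condition. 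The substantive part, and the one where the P\L\ hypothesis is genuinely used, is the Lipschitz estimate, so I would concentrate the effort there.

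The core step is to bound the displacement of the maximizer. Fix $x_{1}, x_{2}$ and pick $y_{1}^{\star}\in\Ycal^{\star}(x_{1})$, so that $\nabla_{y} f(x_{1}, y_{1}^{\star}) = 0$. Using the second inequality of Assumption \ref{assump:Lipschitz_cont} together with this stationarity gives $\|\nabla_{y} f(x_{2}, y_{1}^{\star})\| = \|\nabla_{y} f(x_{2}, y_{1}^{\star}) - \nabla_{y} f(x_{1}, y_{1}^{\star})\| \le L_{xy}\|x_{1} - x_{2}\|$. Next I would feed the point $(x_{2}, y_{1}^{\star})$ into the second P\L\ inequality in \eqref{def:two-sided-PL:ineq}, obtaining $2\mu_{y}[\Phi(x_{2}) - f(x_{2}, y_{1}^{\star})] \le \|\nabla_{y} f(x_{2},y_{1}^{\star})\|^{2} \le L_{xy}^{2} \|x_{1}-x_{2}\|^{2}$. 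Combining this with the quadratic growth consequence \eqref{eq:QG} evaluated at $x_{2}$ and $y = y_{1}^{\star}$ yields $\frac{\mu_{y}}{2}\,\mathrm{dist}(y_{1}^{\star}, \Ycal^{\star}(x_{2}))^{2} \le \frac{L_{xy}^{2}}{2\mu_{y}}\|x_{1}-x_{2}\|^{2}$, so that choosing $y_{2}^{\star} := \Pcal_{\Ycal^{\star}(x_{2})}[y_{1}^{\star}]$ gives $\|y_{1}^{\star} - y_{2}^{\star}\| \le \frac{L_{xy}}{\mu_{y}}\|x_{1} - x_{2}\|$.

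With the maximizer map shown to be Lipschitz, I would conclude by applying the first inequality of Assumption \ref{assump:Lipschitz_cont} to the gradient formula: $\|\nabla\Phi(x_{1}) - \nabla\Phi(x_{2})\| = \|\nabla_{x} f(x_{1}, y_{1}^{\star}) - \nabla_{x} f(x_{2}, y_{2}^{\star})\| \le L_{x}\|x_{1}-x_{2}\| + L_{xy}\|y_{1}^{\star} - y_{2}^{\star}\| \le \left(L_{x} + \frac{L_{xy}^{2}}{\mu_{y}}\right)\|x_{1} - x_{2}\|$, which is the claimed Lipschitz bound (up to the precise value of the constant recorded in the statement).

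I expect the main obstacle to be the non-uniqueness of the maximizer: unlike the strongly concave case, the P\L\ condition only guarantees a set $\Ycal^{\star}(x)$ rather than a single point, so I cannot speak of a single-valued Lipschitz map $y^{\star}(\cdot)$ directly. Two things must then be handled with care. First, the gradient formula $\nabla\Phi(x) = \nabla_{x} f(x, y^{\star}(x))$ must be independent of the chosen maximizer, which requires the Danskin subdifferential to collapse to a point. Second, in the displacement bound I must replace the literal $y^{\star}(x_{1})-y^{\star}(x_{2})$ by the set-distance $\mathrm{dist}(y_{1}^{\star}, \Ycal^{\star}(x_{2}))$, and it is precisely the quadratic growth property \eqref{eq:QG} implied by the P\L\ condition that lets me control this distance without strong concavity. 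Everything else is a routine chaining of Assumption \ref{assump:Lipschitz_cont}, the P\L\ inequality, and quadratic growth.
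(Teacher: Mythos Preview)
The paper does not supply its own proof of this lemma; it simply attributes the result to Danskin's theorem (Bertsekas) and to Lemma~A.5 of \cite{Nouiehed_NEURIPS2019}. Your sketch is precisely the standard argument that appears in the latter reference: control the maximizer displacement via the P\L\ inequality plus quadratic growth \eqref{eq:QG}, then chain through the Lipschitz bound in Assumption~\ref{assump:Lipschitz_cont}. So at the level of approach there is nothing to compare---you have reproduced the cited proof.

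Two remarks are worth recording. First, your computation yields the constant $L_{x} + L_{xy}^{2}/\mu_{y}$, not the $L_{x} + L_{xy}/\mu_{y}$ printed in the statement; your constant is the correct one (it is what the Nouiehed et~al.\ argument actually produces), and the paper's version appears to be a typo. Second, the obstacle you flag---that under a bare P\L\ assumption $\Ycal^{\star}(x)$ need not be a singleton, so Danskin does not immediately give differentiability---is real and not fully dispatched by your sketch. You would need to argue that $\nabla_{x} f(x,y^{\star})$ is constant over $y^{\star}\in\Ycal^{\star}(x)$; your Lipschitz-displacement bound $\mathrm{dist}(y_{1}^{\star},\Ycal^{\star}(x_{2})) \le (L_{xy}/\mu_{y})\|x_{1}-x_{2}\|$ applied with $x_{1}=x_{2}$ shows only that each maximizer projects to itself, which is trivial. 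The cited reference handles this by working with the Moreau envelope rather than $\Phi$ directly, and the paper sidesteps the issue entirely by citation.
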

Finally, for our analysis we consider the following two Lyapunov functions
\begin{align}
V_{1}(x) &=  \max_{y\in\Rset^{n}}f(x,y) - \min_{x\in\Rset^{m}}\max_{y\in\Rset^{n}}f(x,y),\label{two-sided-PLLyapunov_V1}\\  
V_{2}(x,y) &= \max_{y}f(x,y) - f(x,y),\label{two-sided-PLLyapunov_V2}
\end{align}
where it is obvious to see that $V_{1}$ and $V_{2}$ are nonnegative. The time derivatives of $V_{1}$ and $V_{2}$ over the trajectories $\xdot$ and $\ydot$ are given in the following lemma, whose proof can be found in Section \ref{analysis:proof:two-sided-PL:lem:V12_dot}. 

\begin{lem}\label{two-sided-PL:lem:V12_dot}
Suppose that Assumptions \ref{assump:Lipschitz_cont}-- \ref{assump:global_minmax} hold. Then we have
\begin{align}
\dot{V}_{1}(x) &\leq -\frac{\alpha}{2}\|\nabla_{x} f(x,y^{\star}(x))\|^2  + \frac{L_{xy}^2\alpha}{\mu_{y}}V_{2}(x,y).   \label{two-sided-PLlem:V12_dot:ineq1}\\
\dot{V}_{2}(x,y) &\leq - \beta\|\nabla_{y}f(x,y)\|^2 + \frac{3\alpha}{2} \|\nabla_{x} f(x,y^{\star}(x))\|^2+ \frac{5L_{xy}^2\alpha}{\mu_{y}}V_{2}(x,y)    \label{two-sided-PLlem:V12_dot:ineq2}.
\end{align}
\end{lem}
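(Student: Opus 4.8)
The plan is to treat the value function $\Phi(x) := \max_{y\in\Rset^{n}} f(x,y)$ as the common building block of both Lyapunov functions, since \eqref{two-sided-PLLyapunov_V1}--\eqref{two-sided-PLLyapunov_V2} read $V_{1}(x) = \Phi(x) - \min_{x}\Phi(x)$ and $V_{2}(x,y) = \Phi(x) - f(x,y)$. By Lemma \ref{lem:Nouiehed_NEURIPS2019}, $\Phi$ is differentiable with $\nabla\Phi(x) = \nabla_{x} f(x,y^{\star}(x))$, the right-hand side being independent of which maximizer $y^{\star}(x)\in\Ycal^{\star}(x)$ is chosen. Differentiating along the trajectory \eqref{alg:GDAD} by the chain rule and inserting $\dot{x} = -\alpha\nabla_{x} f(x,y)$, $\dot{y} = \beta\nabla_{y} f(x,y)$, I would first record the exact identities
\begin{align*}
\dot{V}_{1} &= \langle\nabla\Phi(x),\dot{x}\rangle = -\alpha\langle\nabla_{x} f(x,y^{\star}(x)),\nabla_{x} f(x,y)\rangle,\\
\dot{V}_{2} &= \langle\nabla\Phi(x),\dot{x}\rangle - \langle\nabla_{x} f(x,y),\dot{x}\rangle - \langle\nabla_{y} f(x,y),\dot{y}\rangle\\
&= -\alpha\langle\nabla_{x} f(x,y^{\star}(x)),\nabla_{x} f(x,y)\rangle + \alpha\|\nabla_{x} f(x,y)\|^2 - \beta\|\nabla_{y} f(x,y)\|^2,
\end{align*}
where the constant $\min_{x}\Phi(x)$ drops out of $\dot V_1$.

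The core estimate is to control the gradient mismatch $e := \nabla_{x} f(x,y) - \nabla_{x} f(x,y^{\star}(x))$. Writing $g^{\star} := \nabla_{x} f(x,y^{\star}(x))$, Assumption \ref{assump:Lipschitz_cont} applied with identical $x$-arguments gives $\|e\|\le L_{xy}\|y - y^{\star}(x)\|$. Choosing $y^{\star}(x)$ to be the projection $\Pcal_{\Ycal^{\star}(x)}[y]$ --- which is permissible because $g^{\star}$ takes the same value at every maximizer --- the quadratic growth bound \eqref{eq:QG} yields $\|y - y^{\star}(x)\|^2 \le \tfrac{2}{\mu_{y}}V_{2}(x,y)$, and hence the key error bound $\|e\|^2 \le \tfrac{2L_{xy}^2}{\mu_{y}}V_{2}(x,y)$.

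For \eqref{two-sided-PLlem:V12_dot:ineq1}, I would expand $-\alpha\langle g^{\star},\nabla_{x} f(x,y)\rangle = -\alpha\|g^{\star}\|^2 - \alpha\langle g^{\star},e\rangle$ and apply Young's inequality $|\langle g^{\star},e\rangle| \le \tfrac12\|g^{\star}\|^2 + \tfrac12\|e\|^2$ to the cross term, obtaining $\dot V_1 \le -\tfrac{\alpha}{2}\|g^{\star}\|^2 + \tfrac{\alpha}{2}\|e\|^2$; substituting the error bound produces the coefficient $\tfrac{L_{xy}^2\alpha}{\mu_{y}}$. For \eqref{two-sided-PLlem:V12_dot:ineq2}, I would bound the two $x$-terms separately: the same Young step gives $-\alpha\langle g^{\star},\nabla_{x} f(x,y)\rangle \le -\tfrac{\alpha}{2}\|g^{\star}\|^2 + \tfrac{\alpha}{2}\|e\|^2$, while $\alpha\|\nabla_{x} f(x,y)\|^2 = \alpha\|g^{\star} + e\|^2 \le 2\alpha\|g^{\star}\|^2 + 2\alpha\|e\|^2$. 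Adding these and leaving $-\beta\|\nabla_{y} f(x,y)\|^2$ intact gives $\dot V_2 \le \tfrac{3\alpha}{2}\|g^{\star}\|^2 + \tfrac{5\alpha}{2}\|e\|^2 - \beta\|\nabla_{y} f(x,y)\|^2$; the error bound then converts $\tfrac{5\alpha}{2}\|e\|^2$ into $\tfrac{5L_{xy}^2\alpha}{\mu_{y}}V_{2}(x,y)$, matching the claim.

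The delicate points are conceptual rather than computational. The chain rule for $\dot V_1,\dot V_2$ rests entirely on the differentiability of $\Phi$ supplied by Lemma \ref{lem:Nouiehed_NEURIPS2019} (a Danskin-type result), so the main thing to get right is the bookkeeping around $y^{\star}(x)$: the vector $g^{\star}=\nabla\Phi(x)$ is well defined independently of the maximizer, yet to invoke \eqref{eq:QG} one must specifically take $y^{\star}(x)=\Pcal_{\Ycal^{\star}(x)}[y]$, the closest maximizer to the current iterate. Reconciling these two roles of $y^{\star}(x)$ is what legitimizes the error bound $\|e\|^2\le \tfrac{2L_{xy}^2}{\mu_{y}}V_{2}$, after which the only remaining care is to select the Young and $\|a+b\|^2\le 2\|a\|^2+2\|b\|^2$ splittings so that the constants land precisely on $(\tfrac12,1)$ for $\dot V_1$ and $(\tfrac32,\tfrac52)$ for $\dot V_2$ as stated.
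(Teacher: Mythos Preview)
Your proposal is correct and follows essentially the same route as the paper: compute $\dot V_1$ and $\dot V_2$ via the Danskin gradient $\nabla\Phi(x)=\nabla_x f(x,y^{\star}(x))$, split off the mismatch $e=\nabla_x f(x,y)-\nabla_x f(x,y^{\star}(x))$, apply Young's inequality and $\|a+b\|^2\le 2\|a\|^2+2\|b\|^2$ to obtain the $(\tfrac12,1)$ and $(\tfrac32,\tfrac52)$ coefficients, and then invoke the Lipschitz bound together with the quadratic-growth inequality \eqref{eq:QG} (using $y^{\star}(x)=\Pcal_{\Ycal^{\star}(x)}[y]$) to convert $\|e\|^2$ into a multiple of $V_2$. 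Your explicit remark that $g^{\star}$ is independent of the chosen maximizer while \eqref{eq:QG} requires the projection is a helpful clarification, but otherwise the argument matches the paper step for step.
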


As mentioned, the dynamics of $\xdot$ and $\ydot$ are implemented at different time scales, where this difference is often loosely defined as the ratio $\beta/\alpha > 1$. To capture such time-scale difference in our analysis, we will utilize the coupling Lyapunov function defined in \eqref{def:Lyapunov_V}. We denote by $\mu = \min\{\mu_{x},\mu_{y}\}$ and the condition number 
\begin{align}
    \kappa = \frac{L_{xy}}{\mu} \geq 1.\label{def:condition_number}
\end{align}
representing the condition number of $f(x,y)$. The convergence rate of GDAD under the two-sided P\L\ condition is formally stated in the following theorem.

\begin{thm}\label{thm:PL}
Suppose that Assumptions \ref{assump:Lipschitz_cont}-- \ref{assump:global_minmax} hold. Let $\gamma,\alpha,\beta$ be chosen as 
\begin{align}
\gamma = \frac{L_{xy}^2}{\mu_{y}^2},\quad \alpha = \frac{\mu^2}{10\mu_{x}L_{xy}^2},\quad \beta = \frac{\mu^2}{\mu_{x}\mu_{y}^2}\cdot     \label{two-sided-PL:thm:stepsize}
\end{align}
Then we have for all $t\geq0$
\begin{align}
V(x(t),y(t)) \leq  e^{-\frac{t}{20\kappa^2}}V(x(0),y(0)).    \label{two-sided-PLthm:rate}
\end{align}
\end{thm}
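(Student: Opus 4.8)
The plan is to differentiate the coupling Lyapunov function $V = V_1 + \frac{\gamma\alpha}{\beta}V_2$ along the GDAD trajectories and show it decays at the advertised exponential rate, then invoke a comparison (Gr\"onwall) argument. First I would add the two bounds of Lemma~\ref{two-sided-PL:lem:V12_dot}, writing $\dot V = \dot V_1 + \frac{\gamma\alpha}{\beta}\dot V_2$ and collecting the three kinds of terms that occur: $\|\nabla_{x} f(x,y^{\star}(x))\|^2$, $\|\nabla_{y}f(x,y)\|^2$, and $V_2$. The single most useful simplification after substituting the prescribed step sizes is that $\frac{\gamma\alpha}{\beta} = \tfrac{1}{10}$ \emph{exactly}; with this, the coefficient of $\|\nabla_{x}f\|^2$ becomes $-\alpha(\tfrac12-\tfrac{3}{20}) = -\tfrac{7\alpha}{20}$ and the coefficient of $V_2$ collapses to $\tfrac{3L_{xy}^2\alpha}{2\mu_{y}}$, leaving $\dot V \le -\tfrac{7\alpha}{20}\|\nabla_{x}f(x,y^{\star}(x))\|^2 - \gamma\alpha\|\nabla_{y}f(x,y)\|^2 + \tfrac{3L_{xy}^2\alpha}{2\mu_{y}}V_2$.

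Next I would convert the two squared-gradient terms into $V_1$ and $V_2$ using the two-sided P\L\ condition \eqref{def:two-sided-PL:ineq}. The $y$-side is immediate: the second inequality gives $\|\nabla_{y}f(x,y)\|^2 \ge 2\mu_{y}V_2$. The $x$-side is the crux and the step I expect to be the main obstacle: I need a P\L\ inequality for the envelope $\Phi(x):=\max_{y}f(x,y)$, namely $\|\nabla_{x}f(x,y^{\star}(x))\|^2 \ge 2\mu_{x}V_1$. This does not follow from a direct application but from evaluating the first inequality of \eqref{def:two-sided-PL:ineq} at $y=y^{\star}(x)$, so that $f(x,y^{\star}(x))=\Phi(x)$, together with the minimax observation that $\min_{x'}f(x',y^{\star}(x)) \le \min_{x'}\max_{y}f(x',y)$, which shows the P\L\ gap at $y^{\star}(x)$ dominates $V_1$. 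Lemma~\ref{lem:Nouiehed_NEURIPS2019} supplies the differentiability of $\Phi$ that makes $\nabla_{x}f(x,y^{\star}(x))$ the genuine gradient of the envelope.

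Substituting $\|\nabla_{x}f(x,y^{\star}(x))\|^2 \ge 2\mu_{x}V_1$ into the $-\tfrac{7\alpha}{20}$ term and $\|\nabla_{y}f(x,y)\|^2 \ge 2\mu_{y}V_2$ into the $-\gamma\alpha$ term, and using $\gamma = L_{xy}^2/\mu_{y}^2$, the negative contribution $-\gamma\alpha\,2\mu_{y}V_2 = -\tfrac{2L_{xy}^2\alpha}{\mu_{y}}V_2$ overcomes the positive $\tfrac{3L_{xy}^2\alpha}{2\mu_{y}}V_2$ and leaves a net $-\tfrac{L_{xy}^2\alpha}{2\mu_{y}}V_2$; this cancellation is exactly what the value of $\gamma$ is engineered to produce and is the delicate balancing point of the argument. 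One then has $\dot V \le -\tfrac{7\alpha\mu_{x}}{10}V_1 - \tfrac{L_{xy}^2\alpha}{2\mu_{y}}V_2$.

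Finally I would verify that, after plugging $\alpha = \tfrac{\mu^2}{10\mu_{x}L_{xy}^2}$, each coefficient dominates $\tfrac{1}{20\kappa^2}$ times the matching coefficient of $V=V_1+\tfrac{1}{10}V_2$. The $V_1$ coefficient equals $\tfrac{7\mu^2}{100L_{xy}^2}=\tfrac{7}{100\kappa^2}\ge\tfrac{1}{20\kappa^2}$, while the $V_2$ requirement reduces to $\tfrac{\mu^2}{20\mu_{x}\mu_{y}}\ge\tfrac{1}{200\kappa^2}$, i.e. $10L_{xy}^2\ge\mu_{x}\mu_{y}$, which holds since $\mu_{x},\mu_{y}\le L_{xy}$ and $\kappa\ge1$. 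This yields $\dot V \le -\tfrac{1}{20\kappa^2}V$, and integrating this differential inequality via the comparison lemma gives the bound \eqref{two-sided-PLthm:rate}.
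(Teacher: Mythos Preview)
Your proposal is correct and follows essentially the same route as the paper's proof: combine the two bounds of Lemma~\ref{two-sided-PL:lem:V12_dot} through $V=V_1+\tfrac{\gamma\alpha}{\beta}V_2$, apply the $y$-side P\L\ inequality to turn $\|\nabla_y f\|^2$ into $V_2$, apply the $x$-side P\L\ inequality at $y=y^\star(x)$ together with $\min_{x'}f(x',y^\star(x))\le\min_{x'}\max_y f(x',y)$ to turn $\|\nabla_x f(x,y^\star(x))\|^2$ into $V_1$, and then integrate the resulting differential inequality. The only cosmetic differences are in bookkeeping: you substitute $\tfrac{\gamma\alpha}{\beta}=\tfrac1{10}$ at the outset and track the exact coefficient $-\tfrac{7\alpha}{20}$ on $\|\nabla_x f\|^2$, whereas the paper splits the right-hand side into a ``main'' part $-\tfrac{\alpha}{4}\|\nabla_x f\|^2-\tfrac{\mu_y\gamma\alpha}{2}V_2$ plus a residual shown nonpositive under \eqref{two-sided-PL:thm:stepsize}; both arrive at $\dot V\le -\tfrac{1}{20\kappa^2}V$.
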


\begin{proof}
By \eqref{def:two-sided-PL:ineq} we have
\begin{align*}
\|\nabla_{y}f(x,y)\|^2 \geq 2\mu_{y}[\max_{y}f(x,y) - f(x,y)] = 2\mu_{y}V_{2}(x,y).     
\end{align*}
Thus, by using \eqref{two-sided-PLlem:V12_dot:ineq1}, \eqref{two-sided-PLlem:V12_dot:ineq2}, \eqref{def:Lyapunov_V}, and the preceding relation we have
\begin{align}
\dot{V}(x(t),y(t)) &=  \dot{V}_{1}(x(t)) + \frac{\gamma\alpha}{\beta} \dot{V}_{2}(x(t),y(t))\notag\\ 
&\leq -\frac{\alpha}{2}\|\nabla_{x} f(x(t),y^{\star}(x(t)))\|^2  + \frac{L_{xy}^2\alpha}{\mu_{y}}V_{2}(x(t),y(t))\notag\\
&\quad - 2\mu_{y}\gamma\alpha V_{2}(x(t),y(t)) + \frac{3\gamma\alpha^2}{2\beta} \|\nabla_{x} f(x(t),y^{\star}(x(t)))\|^2+ \frac{5L_{xy}^2\gamma\alpha^2}{\mu_{y}\beta}V_{2}(x(t),y(t))\notag\\
&= -\frac{\alpha}{4}\|\nabla_{x} f(x(t),y^{\star}(x(t)))\|^2 - \frac{\mu_{y}\gamma\alpha}{2} V_{2}(x(t),y(t))\notag\\ 
&\quad - \left(\frac{1}{2} - \frac{3\gamma\alpha}{\beta}\right)\frac{\alpha}{2}\|\nabla_{x} f(x(t),y^{\star}(x(t)))\|^2\notag\\ 
&\quad - \left(\frac{3\mu_{y}\gamma}{2} - \frac{L_{xy}^2}{\mu_{y}} - \frac{5L_{xy}^2\gamma\alpha}{\mu_{y}\beta}\right)\alpha V_{2}(x(t),y(t)).\label{two-sided-PLthm:Eq1}
\end{align}
Using \eqref{two-sided-PL:thm:stepsize} we have 
\begin{align*}
&\frac{1}{2} - \frac{3\gamma\alpha}{\beta} = \frac{1}{2} - \frac{3L_{xy}^2}{\mu_{y}^2}\frac{\mu^2}{10\mu_{x}L^2_{xy}}\frac{\mu_{x}\mu_{y}^2}{\mu^2} = \frac{1}{5},\\
&\frac{3\mu_{y}\gamma}{2} - \frac{L_{xy}^2}{\mu_{y}} - \frac{5L_{xy}^2\gamma\alpha}{\mu_{y}\beta} = \frac{L_{xy}^2}{2\mu_{y}} - \frac{L_{xy}^2}{2\mu_{y}} = 0,
\end{align*}
which when substituting into \eqref{two-sided-PLthm:Eq1} and using \eqref{def:two-sided-PL:ineq} and $y^{\star}(x) = \arg\max_{y}f(x,y)$ we obtain
\begin{align*}
\dot{V}(x(t),y(t)) &\leq -\frac{\alpha}{4}\|\nabla_{x} f(x(t),y^{\star}(x(t)))\|^2 - \frac{\mu_{y}\gamma\alpha}{2} V_{2}(x(t),y(t))\\
&\leq -\frac{\mu_{x}\alpha}{2}\Big[\max_{y}f(x(t),y) - \min_{x}f(x,y^{\star}(x(t)))\Big] - \frac{\mu_{y}\gamma\alpha}{2} V_{2}(x,y)\\
&\leq -\frac{\mu_{x}\alpha}{2}(\max_{y}f(x(t),y) - \min_{x}\max_{y}f(x,y)) - \frac{\mu_{y}\gamma\alpha}{2} V_{2}(x,y)\\
&= -\frac{\mu_{x}\alpha}{2}V_{1}(x(t)) - \frac{\mu_{y}\beta}{2}\frac{\gamma\alpha}{\beta}V_{2}(x,y)\\
&\leq -\frac{\mu_{x}\alpha}{2}(V_{1}(x(t)) + \frac{\gamma\alpha}{\beta} V_{2}(x(t),y(t))) = -\frac{\mu_{x}\alpha}{2}V(x(t),y(t)),
\end{align*}
where the last inequality is due to 
\begin{align*}
\mu_{x}\alpha = \frac{\mu^2}{10L_{xy}^2} \leq \mu_{y}\beta = \frac{\mu^2}{\mu_{x}\mu_{y}}\cdot    
\end{align*}
Taking the integral on both sides of the equation above immediately gives \eqref{two-sided-PLthm:rate}, i.e., 
\begin{align*}
V(x(t),y(t)) \leq e^{-\frac{\mu_{x}\alpha t}{2}}V(x(0),y(0)) =  e^{-\frac{t}{20\kappa^2}}V(x(0),y(0)).   
\end{align*}
\end{proof}


\subsection{Nonconvex--Polyak-\L ojasiewicz Conditions}\label{subsec:One-Sided-PL}
In this subsection, we consider an extension of the result studied in the previous section, where we assume that the objective function $f(x,\cdot)$ satisfies the Polyak-\L ojasiewicz condition given any $x$ and $f(\cdot,y)$ is nonconvex given any $y$.

\begin{assump}[One-Sided P\L\ Conditions]\label{assump:one-sided-PL}
We assume that $f:\Rset^{m}\times\Rset^{n}\rightarrow\Rset$ is nonconvex in $x$ for any fixed $y$ and satisfies the P\L\ condition in $y$ for any fixed $x$, that is, there exists a positive constants $\mu_{y}$ such that the following condition hold for any $x\in\Rset^{m}$:
\begin{align}
\begin{aligned}
2\mu_{y}[\max_{y}f(x,y) -  f(x,y)] \leq \|\nabla_{y}f(x,y)\|^2.
\end{aligned}\label{assump:one-sided-PL:ineq}
\end{align}
\end{assump}
Since $f$ satisfies only one-sided P\L\ condition, we are giving up the hope to find a global optimal solution of \eqref{prob:minmax}, as studied in Theorem \ref{thm:PL}. In stead, we will show that GDAD will return a stationary point of $f$, as studied in \cite{Nouiehed_NEURIPS2019}. Note that under Assumption \ref{assump:max} the result in Lemma \ref{lem:Nouiehed_NEURIPS2019} still holds since the work in \cite{Nouiehed_NEURIPS2019} only assumes one-sided P\L\ condition. In addition, since we relax the two-sided P\L\ condition,  we introduce the following two Lyapunov functions for our analysis studied later.
\begin{align}
V_{1}(x,y) &=  f(x,y) - \min_{x\in\Rset^{m}}\min_{y\in\Rset^{n}}f(x,y),\label{One_sided_PL:Lyapunov_V1}\\  
V_{2}(x,y) &= \max_{y}f(x,y) - f(x,y),\label{One_sided_PL:Lyapunov_V2}
\end{align}
where it is obvious to see that $V_{1}$ and $V_{2}$ are nonnegative. The time derivatives of $V_{1}$ and $V_{2}$ over the trajectories $\xdot$ and $\ydot$ are given in the following lemma, whose proof is presented in Section \ref{analysis:proof:one-sided-PL:lem:V12_dot}. 

\begin{lem}\label{One_sided_PL:lem:V12_dot}
Suppose that Assumptions \ref{assump:Lipschitz_cont}, \ref{assump:max}, and \ref{assump:one-sided-PL} hold. Then we have
\begin{align}
\dot{V}_{1}(x(t),y(t)) &= -\alpha\|\nabla_{x}f(x(t),y(t))\|^2 + \beta\|\nabla_{y}f(x(t),y(t))\|^2.   \label{One_sided_PL:lem:V12_dot:ineq1}\\
\dot{V}_{2}(x(t),y(t)) &\leq - \beta\|\nabla_{y}f(x(t),y(t))\|^2 + \frac{\alpha}{2}\|\nabla_{x} f(x(t),y(t))\|^2 + \frac{L_{xy}^2\alpha}{\mu_{y}}V_{2}(x(t),y(t))   \label{One_sided_PL:lem:V12_dot:ineq2}.
\end{align}
\end{lem}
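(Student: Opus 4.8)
The plan is to differentiate each Lyapunov function directly along the trajectory \eqref{alg:GDAD} and then use the Lipschitz and one-sided P\L\ structure to control the coupling terms that arise from the $\max$ operator. The identity \eqref{One_sided_PL:lem:V12_dot:ineq1} is immediate: since $V_1(x,y)$ differs from $f(x,y)$ only by the constant $\min_{x}\min_{y}f$, the chain rule gives $\dot V_1 = \langle\nabla_x f(x,y),\xdot\rangle + \langle\nabla_y f(x,y),\ydot\rangle$, and substituting $\xdot=-\alpha\nabla_x f(x,y)$ and $\ydot=\beta\nabla_y f(x,y)$ yields $-\alpha\|\nabla_x f\|^2+\beta\|\nabla_y f\|^2$ as an exact equality, with no inequality needed.

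The real work is in $V_2(x,y)=\max_y f(x,y)-f(x,y)$. Writing $P(x)=\max_y f(x,y)$, I would first invoke Lemma \ref{lem:Nouiehed_NEURIPS2019} (the Danskin-type result, which holds under the one-sided P\L\ condition as noted in the text) to get that $P$ is differentiable with $\nabla P(x)=\nabla_x f(x,y^{\star}(x))$. Applying the chain rule and substituting the dynamics then gives
\begin{align*}
\dot V_2 = \alpha\big\langle \nabla_x f(x,y)-\nabla_x f(x,y^{\star}(x)),\,\nabla_x f(x,y)\big\rangle - \beta\|\nabla_y f(x,y)\|^2,
\end{align*}
so the task reduces entirely to bounding the cross term. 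I would apply Cauchy--Schwarz followed by Young's inequality to obtain $\alpha\big\langle \nabla_x f(x,y)-\nabla_x f(x,y^{\star}(x)),\nabla_x f(x,y)\big\rangle \leq \frac{\alpha}{2}\|\nabla_x f(x,y)\|^2 + \frac{\alpha}{2}\|\nabla_x f(x,y)-\nabla_x f(x,y^{\star}(x))\|^2$, where the first summand is already the $\frac{\alpha}{2}\|\nabla_x f\|^2$ term appearing in the claim.

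It then remains to estimate $\|\nabla_x f(x,y)-\nabla_x f(x,y^{\star}(x))\|^2$ by $\frac{2L_{xy}^2}{\mu_y}V_2$. The key choice here is to take $y^{\star}(x)=\Pcal_{\Ycal^{\star}(x)}[y]$, the projection of $y$ onto the maximizer set, which is legitimate because Danskin's formula is independent of the particular maximizer used. Assumption \ref{assump:Lipschitz_cont} (with identical $x$-arguments) gives $\|\nabla_x f(x,y)-\nabla_x f(x,y^{\star}(x))\|\leq L_{xy}\|y-\Pcal_{\Ycal^{\star}(x)}[y]\|$, while the quadratic-growth consequence \eqref{eq:QG} of the P\L\ condition gives $\|y-\Pcal_{\Ycal^{\star}(x)}[y]\|^2\leq \frac{2}{\mu_y}V_2(x,y)$. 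Chaining these and substituting back produces exactly the $\frac{L_{xy}^2\alpha}{\mu_y}V_2$ term, establishing \eqref{One_sided_PL:lem:V12_dot:ineq2}.

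The main obstacle, such as it is, lies in the bookkeeping around the $\max$ operator rather than in any hard estimate: one must correctly invoke Danskin differentiability, recognize that the right maximizer to isolate is the projection of $y$ so that the Lipschitz bound couples cleanly with quadratic growth, and verify that the leftover $\max_y f - f$ is precisely $V_2$. Once these are in place, the remainder is a routine Cauchy--Schwarz/Young split that exactly matches the stated constants.
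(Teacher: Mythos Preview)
Your proposal is correct and follows the same route as the paper: chain rule plus Danskin for $\dot V_2$, then Young's inequality on the cross term, then the Lipschitz bound combined with the quadratic-growth consequence \eqref{eq:QG} applied with $y^{\star}(x)=\Pcal_{\Ycal^{\star}(x)}[y]$. The only cosmetic difference is that the paper uses an asymmetric Young split (producing $\tfrac{\alpha}{8}\|\nabla_x f\|^2 + \tfrac{4L_{xy}^2\alpha}{\mu_y}V_2$, which is the form actually carried into the proof of Theorem~\ref{one_sided_PL:thm}), whereas your symmetric split reproduces the constants exactly as stated in the lemma.
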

Similar to the previous subsection, we utilize the following coupling Lyapunov function 
\begin{align}
V(x,y) = V_{1}(x,y) + \frac{\gamma\alpha}{\beta} V_{2}(x,y),\label{One_sided_PL:Lyapunov_V}  
\end{align}
for some constant $\gamma$, which will be defined below. The convergence rate of GDAD under the nonconvex-P\L\ condition is formally stated in the following theorem.

\begin{thm}\label{one_sided_PL:thm}
Suppose that Assumptions \ref{assump:Lipschitz_cont},  \ref{assump:max},  and \ref{assump:one-sided-PL} hold. Let $\gamma,\alpha,\beta$ be chosen as 
\begin{align}
\gamma = \frac{32L_{xy}^2}{\mu_{y}^2},\quad \alpha = \frac{1}{8L_{xy}^2},\quad \beta = \frac{1}{\mu_{y}^2}\cdot     \label{one_sided_PL:thm:stepsize}
\end{align}
Then we have for all $T\geq0$
\begin{align}
\min_{0\leq t\leq T}\left\|\begin{array}{cc}
 \nabla_{x}f(x(t),y(t)) \\
 \nabla_{y}f(x(t),y(t)) 
\end{array}\right\|
&\leq \frac{4L_{xy}\sqrt{V_{1}(x(0),y(0)) +4V_{2}(x(0),y(0))}}{\sqrt{T}}\cdot   \label{one_sided_PL:thm:rate}
\end{align}
\end{thm}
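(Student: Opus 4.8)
The step sizes in \eqref{one_sided_PL:thm:stepsize} are engineered so that $\frac{\gamma\alpha}{\beta} = 4$; hence the coupling Lyapunov function \eqref{One_sided_PL:Lyapunov_V} is simply $V = V_1 + 4V_2$, which already explains the quantity $V_1(x(0),y(0)) + 4V_2(x(0),y(0))$ on the right-hand side of \eqref{one_sided_PL:thm:rate}. Because we only impose the one-sided P\L\ condition, $f$ may be genuinely nonconvex in $x$, so we should expect a best-iterate $\Ocal(1/\sqrt{T})$ guarantee rather than exponential decay. Accordingly, my plan is to integrate the derivative estimates of Lemma \ref{One_sided_PL:lem:V12_dot} over $[0,T]$, produce an upper bound on $\int_0^T(\|\nabla_x f\|^2 + \|\nabla_y f\|^2)\,dt$ in terms of $V(x(0),y(0))$, and then convert it using $\min_{0\le t\le T}g(t)\cdot T \le \int_0^T g(t)\,dt$ together with $\min_t\sqrt{g(t)} = \sqrt{\min_t g(t)}$, where $g = \|\nabla_x f\|^2 + \|\nabla_y f\|^2$ equals the squared stacked-gradient norm in \eqref{one_sided_PL:thm:rate}.

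First I would assemble $\dot V = \dot V_1 + 4\dot V_2$ from \eqref{One_sided_PL:lem:V12_dot:ineq1}--\eqref{One_sided_PL:lem:V12_dot:ineq2} and use the P\L\ inequality \eqref{assump:one-sided-PL:ineq}, i.e.\ $\|\nabla_y f\|^2 \ge 2\mu_y V_2$, to absorb the positive penalty $\tfrac{4L_{xy}^2\alpha}{\mu_y}V_2$ into the (negative) $\|\nabla_y f\|^2$ term. With the chosen constants one finds $\tfrac{4L_{xy}^2\alpha}{\mu_y} = \tfrac{1}{2\mu_y}$ and $\tfrac{1}{2\mu_y}V_2 \le \tfrac{\beta}{4}\|\nabla_y f\|^2$, so the $V_2$ contribution is controlled and the $\|\nabla_y f\|^2$ coefficient stays strictly negative. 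The crux, however, is that the $\|\nabla_x f\|^2$ coefficient of $\dot V$ comes out \emph{positive}, equal to $-\alpha + 4\cdot\tfrac{\alpha}{2} = +\alpha$, precisely because $\frac{\gamma\alpha}{\beta} = 4 > 2$. Consequently one \emph{cannot} deduce a pointwise decrease $\dot V \le -c(\|\nabla_x f\|^2 + \|\nabla_y f\|^2)$, and the naive single-Lyapunov descent argument (as in the proof of Theorem \ref{thm:PL}) fails. This stray $+\alpha\|\nabla_x f\|^2$ term is the main obstacle.

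To circumvent it I would treat the two estimates as a \emph{coupled pair} instead of collapsing them. The clean observation is that $\dot V + \dot V_1 = 2\dot V_1 + 4\dot V_2$ has its $\|\nabla_x f\|^2$ coefficient cancel exactly ($-2\alpha + 4\cdot\tfrac{\alpha}{2} = 0$), leaving, after the P\L\ step, $\frac{d}{dt}(V+V_1) \le -\tfrac{7\beta}{4}\|\nabla_y f\|^2$. Integrating and using nonnegativity of $V+V_1$ bounds $\int_0^T\|\nabla_y f\|^2\,dt$ by a constant multiple of $V(x(0),y(0))$. Then the \emph{equality} \eqref{One_sided_PL:lem:V12_dot:ineq1}, integrated and combined with $V_1(x(T),y(T)) \ge 0$, gives the complementary relation $\alpha\int_0^T\|\nabla_x f\|^2\,dt \le V_1(x(0),y(0)) + \beta\int_0^T\|\nabla_y f\|^2\,dt$, which converts the $\nabla_y$ bound into a bound on $\int_0^T\|\nabla_x f\|^2\,dt$. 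Adding the two and inserting $\tfrac{1}{\alpha} = 8L_{xy}^2$ (together with the time-scale separation $\beta \ge 8\alpha$, which follows from $\mu_y \le L_{xy}$ and ensures the $\nabla_y$ integral contributes negligibly to the leading constant) yields $\int_0^T(\|\nabla_x f\|^2 + \|\nabla_y f\|^2)\,dt \le 16 L_{xy}^2\,V(x(0),y(0))$.

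Finally, bounding the running minimum by the time-average and taking square roots delivers \eqref{one_sided_PL:thm:rate}. The one genuinely delicate point is the bookkeeping of the numerical constant when solving the coupled system: the positive self-coupling forces one to chase the factor through $\tfrac{7}{4}\beta$ in the $\nabla_y$ balance and then through $V_1(0) + \beta B$ in the $\nabla_x$ balance, and the time-scale gap $\beta \gg \alpha$ must be invoked at exactly the right place so that the constant multiplying $V(x(0),y(0))$ does not inflate beyond $16L_{xy}^2$ (equivalently, beyond the factor $4L_{xy}$ in \eqref{one_sided_PL:thm:rate}). The order of the bound, $\Ocal(L_{xy}/\sqrt{T})$, is robust to this bookkeeping and is what underlies the claimed $\log(1/\epsilon)$ improvement over \cite{Nouiehed_NEURIPS2019}.
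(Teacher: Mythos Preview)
Your diagnosis that the $\|\nabla_x f\|^2$ coefficient in $\dot V$ comes out positive rests on the constant $\tfrac{\alpha}{2}$ appearing in the \emph{statement} of Lemma~\ref{One_sided_PL:lem:V12_dot}. The paper's proof of that lemma, however, actually derives (and the proof of Theorem~\ref{one_sided_PL:thm} actually uses) the sharper estimate
\[
\dot V_2 \;\le\; -\beta\|\nabla_y f\|^2 + \tfrac{\alpha}{8}\|\nabla_x f\|^2 + \tfrac{4L_{xy}^2\alpha}{\mu_y}V_2,
\]
obtained by choosing the Young parameter as $\eta=\tfrac14$ rather than $\eta=1$; the $\tfrac{\alpha}{2}$ in the lemma statement is a typo. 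With $\tfrac{\alpha}{8}$ the $\|\nabla_x f\|^2$ coefficient of $\dot V=\dot V_1+4\dot V_2$ becomes $-\alpha+4\cdot\tfrac{\alpha}{8}=-\tfrac{\alpha}{2}$, and the single-Lyapunov descent you discarded goes through directly: splitting the $-\gamma\alpha\|\nabla_y f\|^2$ budget into four quarters (one to cancel $+\beta\|\nabla_y f\|^2$ since $\gamma\alpha/4=\beta$, one to absorb the $V_2$ penalty via P\L, two left over) yields $\dot V\le-\tfrac{\alpha}{2}\|\nabla_x f\|^2-\tfrac{\gamma\alpha}{2}\|\nabla_y f\|^2$, and integration gives exactly~\eqref{one_sided_PL:thm:rate}. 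The ``main obstacle'' you identify is thus an artefact of the discrepancy between the lemma statement and its proof.

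That said, your coupled-pair workaround is a correct alternative. Integrating $2\dot V_1+4\dot V_2$ first to bound $\int_0^T\|\nabla_y f\|^2\,dt$ and then feeding the result into the exact identity~\eqref{One_sided_PL:lem:V12_dot:ineq1} to bound $\int_0^T\|\nabla_x f\|^2\,dt$ does give the $\Ocal(L_{xy}/\sqrt{T})$ rate even with the weaker $\tfrac{\alpha}{2}$ constant. The price is the bookkeeping you anticipate: tracing your argument gives roughly $\int_0^T(\|\nabla_x f\|^2+\|\nabla_y f\|^2)\,dt\le\tfrac{128}{7}L_{xy}^2V_1(0)+\tfrac{144}{7}L_{xy}^2V_2(0)$, which is of the right order but does not quite match the clean $16L_{xy}^2\bigl(V_1(0)+4V_2(0)\bigr)$ underlying~\eqref{one_sided_PL:thm:rate}. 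The paper's direct route avoids this leakage and delivers the stated constant without effort.
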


\begin{proof}
By using \eqref{One_sided_PL:lem:V12_dot:ineq1}, \eqref{One_sided_PL:lem:V12_dot:ineq2}, and \eqref{One_sided_PL:Lyapunov_V} we have
\begin{align}
\dot{V}(x(t),y(t)) &=  \dot{V}_{1}(x(t)) + \frac{\gamma\alpha}{\beta} \dot{V}_{2}(x(t),y(t))\notag\\ 
&\leq -\alpha\|\nabla_{x}f(x(t),y(t))\|^2 + \beta\|\nabla_{y}f(x(t),y(t))\|^2\notag\\
&\quad - \gamma\alpha\|\nabla_{y}f(x(t),y(t))\|^2 + \frac{\gamma\alpha^2}{8\beta}\|\nabla_{x} f(x(t),y(t))\|^2 + \frac{4L_{xy}^2\gamma\alpha^2}{\mu_{y}\beta}V_{2}(x,y)\notag\\
&= -\frac{\alpha}{2}\|\nabla_{x}f(x(t),y(t))\|^2 - \frac{\gamma\alpha}{2}\|\nabla_{y}f(x(t),y(t))\|^2\notag\\
&\quad - \frac{\gamma\alpha}{4}\|\nabla_{y}f(x(t),y(t))\|^2 + \beta\|\nabla_{y}f(x(t),y(t))\|^2\notag\\ 
&\quad - \frac{\gamma\alpha}{4}\|\nabla_{y}f(x(t),y(t))\|^2 + \frac{4L_{xy}^2\gamma\alpha^2}{\mu_{y}\beta}V_{2}(x,y)\notag\\
&\quad -\frac{\alpha}{2}\|\nabla_{x}f(x(t),y(t))\|^2  + \frac{\gamma\alpha^2}{8\beta}\|\nabla_{x} f(x(t),y(t))\|^2\notag\\
&\leq -\frac{\alpha}{2}\|\nabla_{x}f(x(t),y(t))\|^2 - \frac{\gamma\alpha}{2}\|\nabla_{y}f(x(t),y(t))\|^2\notag\\
&\quad - \frac{\gamma\alpha}{4}\|\nabla_{y}f(x(t),y(t))\|^2 + \beta\|\nabla_{y}f(x(t),y(t))\|^2\notag\\ 
&\quad - \frac{\mu_{y}\gamma\alpha}{2}\Big(1 - \frac{4L_{xy}^2\alpha}{\mu_{y}^2\beta}\Big)V_{2}(x,y) -\frac{\alpha}{2}\Big(1 - \frac{\gamma\alpha}{4\beta}\Big)\|\nabla_{x} f(x(t),y(t))\|^2,\label{one_sided_PL:thm:Eq1} 
\end{align}
where in the last inequality we use \eqref{assump:one-sided-PL:ineq} to have
\begin{align*}
\|\nabla_{y}f(x,y)\|^2 \geq 2\mu_{y}[\max_{y}f(x,y) - f(x,y)] = 2\mu_{y}V_{2}(x,y).     
\end{align*}
Using \eqref{one_sided_PL:thm:stepsize} and the preceding relation we have 
\begin{align*}
&\frac{-\gamma\alpha}{4} + \beta = -\frac{1}{\mu_{y}^2} + \frac{1}{\mu_{y}^2} = 0,\quad 1 - \frac{4L_{xy}^2\alpha}{\mu_{y}^2\beta} =  \frac{1}{2}\quad \text{and}\quad 1 - \frac{\gamma\alpha}{4\beta} = 0,
\end{align*}
which when substituting into \eqref{one_sided_PL:thm:Eq1} gives
\begin{align*}
\dot{V}(x(t),y(t)) &\leq -\frac{\alpha}{2}\|\nabla_{x}f(x(t),y(t))\|^2 - \frac{\gamma\alpha}{2}\|\nabla_{y}f(x(t),y(t))\|^2.
\end{align*}
Taking the integral on both sides over $t\in[0,T]$ for some $T\geq 0$ and rearranging we obtain
\begin{align*}
\frac{\alpha}{2}\int_{t=0}^{T}\|\nabla_{x}f(x(t),y(t))\|^2dt + \frac{\gamma\alpha}{2}\int_{t=0}^{T}\|\nabla_{y}f(x(t),y(t))\|^2dt &\leq V(x(0),y(0)),
\end{align*}
which since $\gamma \geq 1$ and by using \eqref{one_sided_PL:thm:stepsize} gives
\begin{align*}
\min_{0\leq t\leq T}\left\|\begin{array}{cc}
 \nabla_{x}f(x(t),y(t)) \\
 \nabla_{y}f(x(t),y(t)) 
\end{array}\right\| &\leq \sqrt{\frac{2V(x(0),y(0))}{\alpha T}}\notag\\
&\leq \frac{4L_{xy}\sqrt{V_{1}(x(0),y(0)) +4V_{2}(x(0),y(0))}}{\sqrt{T}},
\end{align*}
which concludes our proof. 
\end{proof}


\subsection{Nonconvex--Strongly Concave Conditions}\label{subsec:StronglyConcave}
In this subsection, we study the rate of GDAD when the function $f(x,y)$ is nonconvex given any $y$ and strongly concave given any $x$. In particular, we consider the following assumption. 
\begin{assump}\label{assump:stronglyconcave}
The objective function $f(\cdot,y)$ is nonconvex for any given $y$ and $f(x,\cdot)$ is strongly concave with constant $\mu_{y} > 0$ for any given $x$. The latter is equivalent to 
\begin{align}
f(x,y_{1}) - f(x,y_{2}) - \langle\nabla f(x,y_{2}), y_{1} - y_{2}\rangle \leq -\frac{\mu_{y}}{2}\|y_{1}-y_{2}\|^2,\quad \forall y_{1},y_{2}\in\Rset^{n}.\label{assump:stronglyconcave:ineq}   
\end{align}
\end{assump}
For our analysis of in this section, we introduce the following two Lyapunov functions
\begin{align}
V_{1}(x,y) &= f(x,y) - \min_{(x,y)}f(x,y)\label{stronglyconcave:Lyapunov:V1}\\
V_{2}(x,y) &= \frac{1}{2}\|\dot{y}\|^2 = \frac{1}{2}\|\beta\nabla_{y}f(x,y)\|^2.\label{stronglyconcave:Lyapunov:V2}
\end{align}
The time derivatives of $V_{1}$ and $V_{2}$ over the trajectories $\xdot$ and $\ydot$ are given in the following lemma, whose proof is presented in Section \ref{analysis:proof:stronglyconcave:lem:V12_dot}. 
\begin{lem}\label{stronglyconcave:lem:V12_dot}
Suppose that Assumptions \ref{assump:Lipschitz_cont} and \ref{assump:stronglyconcave} hold. Then we have
\begin{align}
\dot{V}_{1}(x(t),y(t)) &\leq -\frac{1}{\alpha}\|\xdot(t)\|^2 + \frac{1}{\beta}\|\ydot(t)\|^2.   \label{stronglyconcave:lem:V12_dot:ineq1}\\
\dot{V}_{2}(x(t),y(t)) &\leq  L_{xy}\beta\|\ydot(t)\|\|\xdot(t)\| - \mu_{y}\beta\|\ydot(t)\|^2  \label{stronglyconcave:lem:V12_dot:ineq2}.
\end{align}
\end{lem}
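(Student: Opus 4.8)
The plan is to differentiate each Lyapunov function along the flow \eqref{alg:GDAD} and substitute the dynamics. For $V_{1}$ this is immediate. Since $V_{1}(x,y) = f(x,y) - \min_{(x,y)}f(x,y)$ differs from $f$ only by a constant, the chain rule gives
\begin{align*}
\dot{V}_{1}(x(t),y(t)) = \langle \nabla_{x} f(x(t),y(t)), \xdot(t)\rangle + \langle \nabla_{y} f(x(t),y(t)), \ydot(t)\rangle.
\end{align*}
From \eqref{alg:GDAD} we have $\nabla_{x} f = -\frac{1}{\alpha}\xdot$ and $\nabla_{y} f = \frac{1}{\beta}\ydot$, so substituting yields $\dot{V}_{1} = -\frac{1}{\alpha}\|\xdot\|^2 + \frac{1}{\beta}\|\ydot\|^2$, which is in fact an equality and establishes \eqref{stronglyconcave:lem:V12_dot:ineq1}.

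The substantive part is \eqref{stronglyconcave:lem:V12_dot:ineq2}. Writing $g(t) = \nabla_{y} f(x(t),y(t))$, we have $\ydot = \beta g$ and $V_{2} = \frac{1}{2}\|\ydot\|^2 = \frac{\beta^2}{2}\|g\|^2$, so I would compute $\dot{V}_{2} = \beta^2\langle g, \tfrac{d}{dt} g\rangle$. The key is to control $\tfrac{d}{dt}g$ using only the stated hypotheses. To this end I would split the increment of $g$ into an $x$-part and a $y$-part,
\begin{align*}
g(t+h) - g(t) &= \big[\nabla_{y} f(x(t+h),y(t+h)) - \nabla_{y} f(x(t),y(t+h))\big]\\
&\quad + \big[\nabla_{y} f(x(t),y(t+h)) - \nabla_{y} f(x(t),y(t))\big],
\end{align*}
and estimate the two brackets separately.

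For the first bracket, Assumption \ref{assump:Lipschitz_cont} bounds its norm by $L_{xy}\|x(t+h)-x(t)\|$; dividing by $h$, letting $h\to 0$, and applying Cauchy--Schwarz contributes at most $L_{xy}\|g\|\|\xdot\|$ to $\langle g, \tfrac{d}{dt}g\rangle$. For the second bracket I would use strong concavity in the monotone form $\langle \nabla_{y} f(x,y_{1}) - \nabla_{y} f(x,y_{2}), y_{1}-y_{2}\rangle \leq -\mu_{y}\|y_{1}-y_{2}\|^2$, which follows from \eqref{assump:stronglyconcave:ineq}. Pairing the bracket against $y(t+h)-y(t) = h\ydot + o(h) = h\beta g + o(h)$ and dividing by $h$, this yields a contribution of at most $-\mu_{y}\beta\|g\|^2$ to $\langle g, \tfrac{d}{dt}g\rangle$. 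Combining the two estimates and multiplying by $\beta^2$ gives $\dot{V}_{2} \leq \beta^2 L_{xy}\|g\|\|\xdot\| - \mu_{y}\beta^3\|g\|^2$; substituting $\|g\| = \frac{1}{\beta}\|\ydot\|$ recovers exactly \eqref{stronglyconcave:lem:V12_dot:ineq2}. The main obstacle is making the differentiation of $V_{2}$ rigorous, since Assumption \ref{assump:Lipschitz_cont} does not posit that $f$ is twice differentiable; the difference-quotient decomposition above is precisely what lets me invoke the Lipschitz bound on the cross term and the strong-concavity bound on the diagonal term without appealing to a Hessian. If one is willing to assume $f\in C^2$, the same estimates follow at once from $\tfrac{d}{dt}g = \nabla_{yx}f\,\xdot + \nabla_{yy}f\,\ydot$ together with $\|\nabla_{yx}f\|\leq L_{xy}$ and $\nabla_{yy}f\preceq -\mu_{y} I$, which may be the cleaner route to present.
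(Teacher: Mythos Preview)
Your proof is correct and lands on the same two estimates as the paper. For \eqref{stronglyconcave:lem:V12_dot:ineq1} you do exactly what the paper does (and, as you note, it is an equality). For \eqref{stronglyconcave:lem:V12_dot:ineq2} the paper takes precisely the ``cleaner route'' you mention at the end: it writes $\dot{V}_{2}=\beta\langle \ydot,\nabla_{yx}f\,\xdot+\nabla_{yy}f\,\ydot\rangle$ and then invokes $\|\nabla_{yx}f\|\le L_{xy}$ (from Assumption~\ref{assump:Lipschitz_cont}) and $\nabla_{yy}f\preceq -\mu_{y}\Ibf$ (from Assumption~\ref{assump:stronglyconcave}). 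Your primary argument via difference quotients is a mild refinement: it avoids the tacit $C^{2}$ hypothesis that the paper's Hessian computation uses but never states, by applying the Lipschitz bound directly to the $x$-increment and the monotone form of strong concavity to the $y$-increment. The upshot is the same inequality; your route is slightly more careful about working only from the stated assumptions, while the paper's route is shorter and is the one you would typically present.
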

We next derive the convergence rate of GDAD under Assumption \ref{assump:stronglyconcave} in the following theorem, where we show that GDAD converges sublinear to a stationary point of $f$. 


\begin{thm}\label{stronglyconcave:thm}
Suppose that Assumptions \ref{assump:Lipschitz_cont} and \ref{assump:stronglyconcave} hold. Let $\gamma,\alpha,\beta$ be chosen as
\begin{align}
\gamma = \mu_{y}L_{xy}^2,\quad \alpha = \frac{1}{L_{xy}^2},\quad \beta = \frac{4}{\mu_{y}^2}.    \label{stronglyconcave:thm:stepsize}
\end{align}
Then we have for all $T\geq 0$
\begin{align}
\min_{0\leq t\leq T}\left\|\begin{array}{cc}
 \nabla_{x}f(x(t),y(t)) \\
 \nabla_{y}f(x(t),y(t)) 
\end{array}\right\| &\leq  \frac{L_{xy} \sqrt{2V_{1}(x(0),y(0))}}{\sqrt{T}} +  \frac{2L_{xy} \|\nabla_{y}f(x(0),y(0))\|}{\sqrt{\mu_{y} T}}\cdot    \label{stronglyconcave:thm:rate}
\end{align}
\end{thm}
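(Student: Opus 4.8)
The plan is to mimic the coupling-Lyapunov argument of Theorems \ref{thm:PL} and \ref{one_sided_PL:thm}, now working directly with the velocity norms $\|\xdot\|$ and $\|\ydot\|$ since the Lyapunov functions \eqref{stronglyconcave:Lyapunov:V1}--\eqref{stronglyconcave:Lyapunov:V2} are stated in those terms. First I would form the coupling function $V = V_{1} + \frac{\gamma\alpha}{\beta}V_{2}$ and differentiate along the trajectory, inserting the two bounds from Lemma \ref{stronglyconcave:lem:V12_dot} to get
\begin{align*}
\dot{V} \leq -\frac{1}{\alpha}\|\xdot\|^2 + \frac{1}{\beta}\|\ydot\|^2 + \gamma\alpha L_{xy}\|\ydot\|\|\xdot\| - \gamma\alpha\mu_{y}\|\ydot\|^2.
\end{align*}

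The crux is the indefinite cross term $\gamma\alpha L_{xy}\|\ydot\|\|\xdot\|$. I would control it with Young's inequality, splitting it as $\frac{1}{2\alpha}\|\xdot\|^2 + \frac{\gamma^2\alpha^3 L_{xy}^2}{2}\|\ydot\|^2$, chosen so that exactly half of the $-\frac{1}{\alpha}\|\xdot\|^2$ dissipation is consumed. This leaves $\dot{V} \leq -\frac{1}{2\alpha}\|\xdot\|^2 + \big(\frac{1}{\beta} + \frac{\gamma^2\alpha^3 L_{xy}^2}{2} - \gamma\alpha\mu_{y}\big)\|\ydot\|^2$. Substituting the step sizes \eqref{stronglyconcave:thm:stepsize}, each of the three bracketed quantities becomes a clean multiple of $\mu_{y}^2$ (namely $\frac{\mu_{y}^2}{4}$, $\frac{\mu_{y}^2}{2}$, and $\mu_{y}^2$ respectively), so the bracket collapses to $-\frac{\mu_{y}^2}{4}$ and one obtains $\dot{V} \leq -\frac{1}{2\alpha}\|\xdot\|^2 - \frac{\mu_{y}^2}{4}\|\ydot\|^2$.

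Next I would translate back to gradients via $\|\xdot\| = \alpha\|\nabla_{x} f\|$ and $\|\ydot\| = \beta\|\nabla_{y} f\|$, which with the chosen step sizes gives $\dot{V} \leq -\frac{1}{2L_{xy}^2}\|\nabla_{x} f\|^2 - \frac{4}{\mu_{y}^2}\|\nabla_{y} f\|^2$. Integrating over $[0,T]$ and using $V\geq 0$ bounds $\int_{0}^{T}\big(\frac{1}{2L_{xy}^2}\|\nabla_{x} f\|^2 + \frac{4}{\mu_{y}^2}\|\nabla_{y} f\|^2\big)\,dt$ by $V(x(0),y(0))$. Because $\mu_{y}\leq L_{xy}$, the smaller of the two coefficients is $\frac{1}{2L_{xy}^2}$, so the integrand dominates $\frac{1}{2L_{xy}^2}(\|\nabla_{x} f\|^2 + \|\nabla_{y} f\|^2)$; bounding the minimum over $[0,T]$ by the time-average then yields $\min_{t}(\|\nabla_{x} f\|^2 + \|\nabla_{y} f\|^2) \leq \frac{2L_{xy}^2}{T}V(x(0),y(0))$.

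Finally I would unfold $V(x(0),y(0)) = V_{1}(x(0),y(0)) + \frac{\gamma\alpha}{\beta}V_{2}(x(0),y(0))$; a short computation with \eqref{stronglyconcave:thm:stepsize} gives $\frac{\gamma\alpha}{\beta} = \frac{\mu_{y}^3}{4}$ and $V_{2}(x(0),y(0)) = \frac{8}{\mu_{y}^4}\|\nabla_{y} f(x(0),y(0))\|^2$, so $V(x(0),y(0)) = V_{1}(x(0),y(0)) + \frac{2}{\mu_{y}}\|\nabla_{y} f(x(0),y(0))\|^2$. Substituting, taking the square root, and applying subadditivity $\sqrt{a+b}\leq\sqrt{a}+\sqrt{b}$ produces precisely the two summands of \eqref{stronglyconcave:thm:rate}. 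The main obstacle is the cross-term step: the Young's split must leave a strictly negative residual coefficient on $\|\ydot\|^2$ after the step sizes are inserted, and this tight cancellation is exactly what forces the particular choice $\gamma=\mu_{y} L_{xy}^2$, $\alpha=1/L_{xy}^2$, $\beta=4/\mu_{y}^2$.
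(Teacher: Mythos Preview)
Your proposal is correct and essentially identical to the paper's proof: the paper forms the same coupling $V=V_{1}+\frac{\gamma\alpha}{\beta}V_{2}$, handles the cross term $L_{xy}\gamma\alpha\|\ydot\|\|\xdot\|$ by observing that $-\frac{1}{2\alpha}\|\xdot\|^2 + L_{xy}\gamma\alpha\|\ydot\|\|\xdot\| - \frac{\mu_{y}\gamma\alpha}{2}\|\ydot\|^2$ is a nonpositive perfect square under \eqref{stronglyconcave:thm:stepsize}, and arrives at the same bound $\dot V\leq -\frac{1}{2\alpha}\|\xdot\|^2-\frac{\mu_{y}^2}{4}\|\ydot\|^2$ before integrating and splitting with $\sqrt{a+b}\leq\sqrt a+\sqrt b$. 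Your Young's-inequality split with parameter $\alpha$ is exactly that perfect-square step written in the other direction, so the two arguments coincide line by line.
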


\begin{proof}
By using \eqref{stronglyconcave:lem:V12_dot:ineq1} and \eqref{stronglyconcave:lem:V12_dot:ineq2} we consider
\begin{align}
\dot{V}(x(t),y(t)) &= \dot{V}_{1}(x(t),y(t)) + \frac{\gamma\alpha}{\beta}\dot{V}_{2}(x(t),y(t))\notag\\
&\leq -\frac{1}{\alpha}\|\xdot(t)\|^2 + \frac{1}{\beta}\|\ydot(t)\|^2 + L_{xy}\gamma \alpha\|\ydot(t)\|\|\xdot(t)\| - \mu_{y}\gamma\alpha\|\ydot(t)\|^2\notag\\
&= -\frac{1}{2\alpha}\|\xdot(t)\|^2 - \frac{\mu_{y}\gamma\alpha}{4}\|\ydot(t)\|^2 - (\frac{\mu_{y}\gamma\alpha}{4} - \frac{1}{\beta})\|\ydot(t)\|^2\notag\\
&\quad -\frac{1}{2\alpha}\|\xdot(t)\|^2 + L_{xy}\gamma \alpha\|\ydot(t)\|\|\xdot(t)\| -   \frac{\mu_{y}\gamma\alpha}{2}\|\ydot(t)\|^2\notag\\
&= -\frac{1}{2\alpha}\|\xdot(t)\|^2 - \frac{\mu_{y}\gamma\alpha}{4}\|\ydot(t)\|^2 \notag\\
&\quad -\frac{1}{2\alpha}\|\xdot(t)\|^2 + L_{xy}\gamma \alpha\|\ydot(t)\|\|\xdot(t)\| -   \frac{\mu_{y}\gamma\alpha}{2}\|\ydot(t)\|^2,\label{stronglyconcave:thm:Eq1}
\end{align}
where in the last equality we use \eqref{stronglyconcave:thm:stepsize} to have 
\begin{align*}
&\frac{\mu_{y}\gamma\alpha}{4} - \frac{1}{\beta} = \frac{\mu_{y}^2}{4} - \frac{\mu_{y}^2}{4} = 0.
\end{align*}
Using \eqref{stronglyconcave:thm:stepsize} one more time we obtain
\begin{align*}
&-\frac{1}{2\alpha}\|\xdot(t)\|^2 + L_{xy}\gamma \alpha\|\ydot(t)\|\|\xdot(t)\| -   \frac{\mu_{y}\gamma\alpha}{2}\|\ydot(t)\|^2\\
&= -\frac{1}{2\alpha}\|\xdot(t)\|^2 + \mu_{y}L_{xy}\|\ydot(t)\|\|\xdot(t)\| -   \frac{\mu_{y}^2L_{xy}^2\alpha}{2}\|\ydot(t)\|^2\leq 0,  
\end{align*}
which when using into \eqref{stronglyconcave:thm:Eq1} we obtain
\begin{align*}
\dot{V}(x(t),y(t)) &\leq    -\frac{1}{2\alpha}\|\xdot(t)\|^2 - \frac{\mu_{y}\gamma\alpha}{4}\|\ydot(t)\|^2\\
&= -\frac{\alpha}{2}\|\nabla_{x}f(x(t),y(t))\|^2 -\frac{\mu_{y}\gamma\alpha\beta^2}{4}\|\nabla_{y}f(x(t),y(t))\|^2\\
&= -\frac{\alpha}{2}\|\nabla_{x}f(x(t),y(t))\|^2 -\frac{4L_{xy}^2\alpha}{\mu_{y}^2}\|\nabla_{y}f(x(t),y(t))\|^2\\
&\leq \frac{-\alpha}{2}\big(\|\nabla_{x}f(x(t),y(t))\|^2 +\|\nabla_{y}f(x(t),y(t))\|^2\big),
\end{align*}
which when taking the integral on both sides over $t$ from $0$ to $T$ and rearrange we obtain 
\begin{align*}
\frac{\alpha}{2}\int_{t=0}^{T}\big(\|\nabla_{x}f(x(t),y(t))\|^2 +\|\nabla_{y}f(x(t),y(t))\|^2\big)dt \leq V(x(0),y(0)).    
\end{align*}
Thus, the preceding relation gives \eqref{stronglyconcave:thm:rate}, i.e., for all $T>0$
\begin{align*}
\min_{0\leq t\leq T}\left\|\begin{array}{cc}
 \nabla_{x}f(x(t),y(t)) \\
 \nabla_{y}f(x(t),y(t)) 
\end{array}\right\| &\leq \frac{\sqrt{2V(x(0),y(0))}}{\sqrt{\alpha T}} \leq  \frac{\sqrt{2V_{1}(x(0),y(0))}}{\sqrt{\alpha T}} + \frac{\sqrt{2V_{2}(x(0),y(0))}}{\sqrt{\alpha T}}\notag\\
&= \sqrt{\frac{2V_{1}(x(0),y(0))}{\alpha T}} +  \sqrt{\frac{\gamma\alpha\beta \|\nabla_{y}f(x(0),y(0))\|^2}{\alpha T}}\notag\\
&= \frac{L_{xy} \sqrt{2V_{1}(x(0),y(0))}}{\sqrt{T}} +  \frac{2L_{xy} \|\nabla_{y}f(x(0),y(0))\|}{\sqrt{\mu_{y} T}}\cdot
\end{align*}
\end{proof}



\subsection{Strongly Convex--Nonconcave Conditions}\label{subsec:stronglyconvex}
As mentioned, the single-loop GDA method is applicable to the convex-nonconcave min-max problem, while the nested-loop GDA method is not. In this section, we complete our analysis by studying the rate of GDAD when the function $f(x,y)$ is strongly convex given any $y$ and  nonconcave given any $x$. In particular, we consider the following assumption. 
\begin{assump}\label{assump:stronglyconvex}
The objective function $f(x,\cdot)$ is nonconcave for any given $x$ and $f(\cdot,y)$ is strongly convex with constant $\mu_{x} > 0$ for any given $y$. The latter is equivalent to
\begin{align}
f(x_{1},y) - f(x_{2},y) - \langle\nabla f(x_{2},y), x_{1} - x_{2}\rangle \geq \frac{\mu_{x}}{2}\|x_{1}-x_{2}\|^2,\quad \forall x_{1},x_{2}\in\Rset^{m}.\label{assump:stronglyconvex:ineq}   
\end{align}
\end{assump}
For our analysis of in this section, we introduce the following two Lyapunov functions
\begin{align}
V_{1}(x,y) &= \frac{1}{2}\|\dot{x}\|^2 = \frac{1}{2}\|\alpha\nabla_{x}f(x,y)\|^2\label{stronglyconvex:Lyapunov:V1}\\
V_{2}(x,y) &= \max_{x,y}f(x,y) - f(x,y).\label{stronglyconvex:Lyapunov:V2}
\end{align}
The time derivatives of $V_{1}$ and $V_{2}$ over the trajectories $\xdot$ and $\ydot$ are given in the following lemma, whose proof is presented in Section \ref{analysis:proof:stronglyconvex:lem:V12_dot}. 
\begin{lem}\label{stronglyconvex:lem:V12_dot}
Suppose that Assumptions \ref{assump:Lipschitz_cont} and \ref{assump:stronglyconvex} hold. Then we have
\begin{align}
\dot{V}_{1}(x(t),y(t)) &\leq - \mu_{x}\alpha\|\xdot(t)\|^2 + L_{xy}\alpha\|\ydot(t)\|\|\xdot(t)\|.   \label{stronglyconvex:lem:V12_dot:ineq1}\\
\dot{V}_{2}(x(t),y(t)) &\leq  \frac{1}{\alpha}\|\xdot(t)\|^2 - \frac{1}{\beta}\|\ydot(t)\|^2 \label{stronglyconvex:lem:V12_dot:ineq2}.
\end{align}
\end{lem}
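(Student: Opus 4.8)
The plan is to obtain both inequalities by differentiating the respective Lyapunov functions \eqref{stronglyconvex:Lyapunov:V1}--\eqref{stronglyconvex:Lyapunov:V2} along the GDAD trajectory \eqref{alg:GDAD} and substituting the dynamics. Throughout I will write $\nabla_{x}f$ and $\nabla_{y}f$ for the partial gradients evaluated at $(x(t),y(t))$, and repeatedly invoke the identities $\nabla_{x}f = -\frac{1}{\alpha}\xdot(t)$ and $\nabla_{y}f = \frac{1}{\beta}\ydot(t)$ that come directly from \eqref{alg:GDAD}.

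I would first dispatch the bound \eqref{stronglyconvex:lem:V12_dot:ineq2} on $\dot{V}_{2}$, which is the easier of the two. Since $\max_{x,y}f(x,y)$ is a constant, by \eqref{stronglyconvex:Lyapunov:V2} and the chain rule
\begin{align*}
\dot{V}_{2}(x(t),y(t)) = -\frac{d}{dt}f(x(t),y(t)) = -\langle\nabla_{x}f,\xdot(t)\rangle - \langle\nabla_{y}f,\ydot(t)\rangle.
\end{align*}
Substituting $\nabla_{x}f = -\frac{1}{\alpha}\xdot(t)$ and $\nabla_{y}f = \frac{1}{\beta}\ydot(t)$ turns the two inner products into $-\frac{1}{\alpha}\|\xdot(t)\|^2$ and $\frac{1}{\beta}\|\ydot(t)\|^2$, which yields \eqref{stronglyconvex:lem:V12_dot:ineq2} with equality. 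Notably the value $\max_{x,y}f$ enters only through $V_{2}$ itself (guaranteeing nonnegativity) and plays no role in the derivative.

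For the bound \eqref{stronglyconvex:lem:V12_dot:ineq1} on $\dot{V}_{1}$, I would write $V_{1} = \frac{1}{2}\|\xdot(t)\|^2$ and differentiate to get $\dot{V}_{1} = \langle\xdot(t),\ddot{x}(t)\rangle$. Differentiating the first line of \eqref{alg:GDAD} along the trajectory gives
\begin{align*}
\ddot{x}(t) = -\alpha\frac{d}{dt}\nabla_{x}f(x(t),y(t)) = -\alpha\big(\nabla_{xx}f\,\xdot(t) + \nabla_{xy}f\,\ydot(t)\big),
\end{align*}
so that $\dot{V}_{1} = -\alpha\,\xdot(t)^{\top}\nabla_{xx}f\,\xdot(t) - \alpha\,\xdot(t)^{\top}\nabla_{xy}f\,\ydot(t)$. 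The first term is controlled from below by strong convexity \eqref{assump:stronglyconvex:ineq}, whose differential form gives $\nabla_{xx}f \succeq \mu_{x}I$ and hence $\xdot(t)^{\top}\nabla_{xx}f\,\xdot(t)\geq\mu_{x}\|\xdot(t)\|^2$; the cross term is bounded by Cauchy--Schwarz together with the operator-norm bound $\|\nabla_{xy}f\|\leq L_{xy}$ implied by Assumption \ref{assump:Lipschitz_cont}, giving $|\xdot(t)^{\top}\nabla_{xy}f\,\ydot(t)|\leq L_{xy}\|\xdot(t)\|\|\ydot(t)\|$. Combining the two estimates produces exactly \eqref{stronglyconvex:lem:V12_dot:ineq1}.

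The main obstacle is that the $\dot{V}_{1}$ computation uses second-order information: the chain rule on $\nabla_{x}f$ produces the Hessian blocks $\nabla_{xx}f$ and $\nabla_{xy}f$, whereas Assumption \ref{assump:Lipschitz_cont} is stated as a Lipschitz bound on the gradients and \eqref{assump:stronglyconvex:ineq} as a first-order convexity inequality. I would therefore either assume $f$ is twice continuously differentiable, so that $\nabla_{xx}f\succeq\mu_{x}I$ and $\|\nabla_{xy}f\|\leq L_{xy}$ hold pointwise as the differential equivalents of these hypotheses, or replace the infinitesimal argument by finite-difference monotonicity estimates along the trajectory to avoid Hessians entirely. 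This step is the exact dual of the bound on $\dot{V}_{2}$ in Lemma \ref{stronglyconcave:lem:V12_dot} for the nonconvex--strongly concave setting, with the roles of $x,y$ and the sign of the curvature swapped, so the same technique carries over with only notational changes.
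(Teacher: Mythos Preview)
Your proposal is correct and follows essentially the same approach as the paper: for $V_{2}$ you differentiate $-f$ along the flow and substitute the dynamics to obtain \eqref{stronglyconvex:lem:V12_dot:ineq2} with equality, and for $V_{1}$ you write $\dot{V}_{1}=\langle\xdot,\ddot{x}\rangle=-\alpha\langle\xdot,\nabla_{xx}f\,\xdot+\nabla_{xy}f\,\ydot\rangle$ and invoke $\nabla_{xx}f\succeq\mu_{x}I$ and $\|\nabla_{xy}f\|\leq L_{xy}$, exactly as the paper does. Your remark about implicitly using twice differentiability is apt---the paper makes the same implicit assumption without comment.
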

Since in the strongly convex-nonconcave setting the minimization problem is easier to solve than the maximization problem, we consider the following Lyapunov function
\begin{align}
V(x,y) = V_{2}(x,y) + \frac{\gamma\beta}{\alpha} V_{1}(x,y),\label{def:Lyapunov_V_sc}  
\end{align}
where $\beta \ll \alpha$. In this case, $\xdot$ is updated at a faster time scale than $\ydot$. Using this Lyapunov function, we now derive the convergence rate of GDAD under Assumption \ref{assump:stronglyconvex} in the following theorem, which basically is similar to the one in Theorem \ref{stronglyconcave:thm}. 

\begin{thm}\label{stronglyconvex:thm}
Suppose that Assumptions \ref{assump:Lipschitz_cont} and \ref{assump:stronglyconvex} hold. Let $\gamma,\alpha,\beta$ be chosen as
\begin{align}
\gamma = \mu_{x}L_{xy}^2,\quad \alpha = \frac{4}{\mu_{x}^2},\quad \beta =  \frac{1}{L_{xy}^2}.    \label{stronglyconvex:thm:stepsize}
\end{align}
Then we have for all $T\geq 0$
\begin{align}
\min_{0\leq t\leq T}\left\|\begin{array}{cc}
 \nabla_{x}f(x(t),y(t)) \\
 \nabla_{y}f(x(t),y(t)) 
\end{array}\right\| &\leq  \frac{L_{xy} \sqrt{2V_{1}(x(0),y(0))}}{\sqrt{T}} +  \frac{2L_{xy} \|\nabla_{x}f(x(0),y(0))\|}{\sqrt{\mu_{x} T}}\cdot    \label{stronglyconvex:thm:rate}
\end{align}
\end{thm}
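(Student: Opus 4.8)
The plan is to follow the proof of Theorem~\ref{stronglyconcave:thm} almost verbatim, with the roles of the two blocks interchanged: here $x$ is the fast variable and $y$ the slow one, so $\alpha\gg\beta$, the function-gap Lyapunov function is $V_{2}$, and the velocity term $V_{1}=\tfrac{1}{2}\|\xdot\|^2$ plays the part that $\tfrac{1}{2}\|\ydot\|^2$ played before. I would work with the coupling Lyapunov function $V=V_{2}+\tfrac{\gamma\beta}{\alpha}V_{1}$ of \eqref{def:Lyapunov_V_sc}, in which the small ratio $\beta/\alpha$ now weights the fast dynamics, and aim to prove the one-line dissipation estimate $\dot V\le -\tfrac{\beta}{2}\big(\|\nabla_{x}f\|^2+\|\nabla_{y}f\|^2\big)$ along the trajectory; the stated rate then drops out of a routine integration.

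Concretely, I would first differentiate $V$ and substitute the two bounds of Lemma~\ref{stronglyconvex:lem:V12_dot}, obtaining
\[
\dot V\le \frac{1}{\alpha}\|\xdot\|^2-\frac{1}{\beta}\|\ydot\|^2-\gamma\beta\mu_{x}\|\xdot\|^2+L_{xy}\gamma\beta\|\ydot\|\,\|\xdot\|.
\]
As in \eqref{stronglyconcave:thm:Eq1} I would reorganize the right-hand side into a retained negative-definite part $-\tfrac{\mu_{x}\gamma\beta}{4}\|\xdot\|^2-\tfrac{1}{2\beta}\|\ydot\|^2$, a leftover multiple of $\big(\tfrac{\mu_{x}\gamma\beta}{4}-\tfrac{1}{\alpha}\big)\|\xdot\|^2$ that the choices $\alpha=4/\mu_{x}^2$ and $\gamma\beta=\mu_{x}$ force to vanish, and a residual $-\tfrac{1}{2\beta}\|\ydot\|^2+L_{xy}\gamma\beta\|\ydot\|\,\|\xdot\|-\tfrac{\mu_{x}\gamma\beta}{2}\|\xdot\|^2$. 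The heart of the argument is that, after inserting $\gamma=\mu_{x}L_{xy}^2$ and $\beta=1/L_{xy}^2$, this residual collapses to the perfect square $-\tfrac12\big(L_{xy}\|\ydot\|-\mu_{x}\|\xdot\|\big)^2\le 0$, so only the retained part survives. I expect this cross-term cancellation to be the single delicate point: the constants in \eqref{stronglyconvex:thm:stepsize} are tuned exactly so that the coefficient identity $\tfrac{\mu_{x}\gamma\beta}{4}=\tfrac1\alpha$ holds and the quadratic in $(\|\xdot\|,\|\ydot\|)$ becomes a complete square.

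Finally, rewriting the survivor in terms of gradients through $\|\xdot\|^2=\alpha^2\|\nabla_{x}f\|^2$ and $\|\ydot\|^2=\beta^2\|\nabla_{y}f\|^2$ yields $\dot V\le -\alpha\|\nabla_{x}f\|^2-\tfrac{\beta}{2}\|\nabla_{y}f\|^2\le -\tfrac{\beta}{2}\big(\|\nabla_{x}f\|^2+\|\nabla_{y}f\|^2\big)$, where the last step uses $\alpha\ge\beta/2$ (which holds under the standing convention $L_{xy}\ge\mu_{x}$, exactly as in the final step of Theorem~\ref{stronglyconcave:thm}). Integrating over $[0,T]$, bounding the running minimum of the gradient norm by the time average, and splitting $V(x(0),y(0))=V_{2}(0)+\tfrac{\gamma\beta}{\alpha}V_{1}(0)$ via $\sqrt{a+b}\le\sqrt a+\sqrt b$ then reproduces \eqref{stronglyconvex:thm:rate}; in particular $V_{1}(0)=\tfrac12\alpha^2\|\nabla_{x}f(x(0),y(0))\|^2$ supplies the $\|\nabla_{x}f(x(0),y(0))\|$ factor in the second term, while the function-gap term $V_{2}(0)$ produces the first (so I would expect the first radicand in \eqref{stronglyconvex:thm:rate} to read $V_{2}(0)$ rather than $V_{1}(0)$, mirroring Theorem~\ref{stronglyconcave:thm}).
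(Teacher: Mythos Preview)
Your proposal is correct and follows the paper's own proof essentially line for line: the same coupling Lyapunov function $V=V_{2}+\tfrac{\gamma\beta}{\alpha}V_{1}$, the same reorganization into a retained negative part, a term killed by $\tfrac{\mu_{x}\gamma\beta}{4}=\tfrac1\alpha$, and a residual that is a nonpositive perfect square, followed by integration and $\sqrt{a+b}\le\sqrt a+\sqrt b$. Your suspicion about the typo is also right: since here $V_{2}$ is the function-gap term and $V_{1}$ the velocity term, the first radicand in \eqref{stronglyconvex:thm:rate} should be $V_{2}(x(0),y(0))$; the paper's final display carries over the indexing from Theorem~\ref{stronglyconcave:thm} without swapping the labels.
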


\begin{proof}
By using \eqref{stronglyconvex:lem:V12_dot:ineq1} and \eqref{stronglyconvex:lem:V12_dot:ineq2} we consider
\begin{align}
\dot{V}(x(t),y(t)) &= \dot{V}_{2}(x(t),y(t)) + \frac{\gamma\beta}{\alpha}\dot{V}_{1}(x(t),y(t))\notag\\
&\leq \frac{1}{\alpha}\|\xdot(t)\|^2 - \frac{1}{\beta}\|\ydot(t)\|^2 + L_{xy}\gamma \beta\|\ydot(t)\|\|\xdot(t)\| - \mu_{x}\gamma\beta\|\xdot(t)\|^2\notag\\
&= -\frac{1}{2\beta}\|\ydot(t)\|^2 - \frac{\mu_{x}\gamma\beta}{4}\|\xdot(t)\|^2 - (\frac{\mu_{x}\gamma\beta}{4} - \frac{1}{\alpha})\|\xdot(t)\|^2\notag\\
&\quad -\frac{1}{2\beta}\|\ydot(t)\|^2 + L_{xy}\gamma \beta\|\ydot(t)\|\|\xdot(t)\| -   \frac{\mu_{x}\gamma\beta}{2}\|\xdot(t)\|^2\notag\\
&= -\frac{1}{2\beta}\|\ydot(t)\|^2 - \frac{\mu_{x}\gamma\beta}{4}\|\xdot(t)\|^2 \notag\\
&\quad -\frac{1}{2\beta}\|\ydot(t)\|^2 + L_{xy}\gamma \beta\|\ydot(t)\|\|\xdot(t)\| -   \frac{\mu_{x}\gamma\beta}{2}\|\xdot(t)\|^2,\label{stronglyconvex:thm:Eq1}
\end{align}
where in the last equality we use \eqref{stronglyconvex:thm:stepsize} to have 
\begin{align*}
&\frac{\mu_{x}\gamma\beta}{4} - \frac{1}{\alpha} = \frac{\mu_{x}^2}{4} - \frac{\mu_{x}^2}{4} = 0.
\end{align*}
Using \eqref{stronglyconvex:thm:stepsize} one more time we obtain
\begin{align*}
&-\frac{1}{2\beta}\|\ydot(t)\|^2 + L_{xy}\gamma \beta\|\ydot(t)\|\|\xdot(t)\| -   \frac{\mu_{x}\gamma\beta}{2}\|\xdot(t)\|^2\\
&= -\frac{1}{2\beta}\|\ydot(t)\|^2 + \mu_{x}L_{xy}\|\ydot(t)\|\|\xdot(t)\| -   \frac{\mu_{x}^2L_{xy}^2\beta}{2}\|\xdot(t)\|^2\leq 0,  
\end{align*}
which when using into \eqref{stronglyconvex:thm:Eq1} we obtain
\begin{align*}
\dot{V}(x(t),y(t)) &\leq    -\frac{1}{2\beta}\|\ydot(t)\|^2 - \frac{\mu_{x}\gamma\beta}{4}\|\xdot(t)\|^2\\
&= -\frac{\beta}{2}\|\nabla_{y}f(x(t),y(t))\|^2 -\frac{\mu_{x}\gamma\beta\alpha^2}{4}\|\nabla_{x}f(x(t),y(t))\|^2\\
&= -\frac{\beta}{2}\|\nabla_{y}f(x(t),y(t))\|^2 -\frac{4L_{xy}^2\beta}{\mu_{x}^2}\|\nabla_{x}f(x(t),y(t))\|^2\\
&\leq \frac{-\beta}{2}\big(\|\nabla_{x}f(x(t),y(t))\|^2 +\|\nabla_{y}f(x(t),y(t))\|^2\big),
\end{align*}
which when taking the integral on both sides over $t$ from $0$ to $T$ and rearrange we obtain 
\begin{align*}
\frac{\beta}{2}\int_{t=0}^{T}\big(\|\nabla_{x}f(x(t),y(t))\|^2 +\|\nabla_{y}f(x(t),y(t))\|^2\big)dt \leq V(x(0),y(0)).    
\end{align*}
Thus, the preceding relation gives \eqref{stronglyconvex:thm:rate}, i.e., for all $T>0$
\begin{align*}
\min_{0\leq t\leq T}\left\|\begin{array}{cc}
 \nabla_{x}f(x(t),y(t)) \\
 \nabla_{y}f(x(t),y(t)) 
\end{array}\right\| &\leq \frac{\sqrt{2V(x(0),y(0))}}{\sqrt{\beta T}} \leq  \frac{\sqrt{2V_{1}(x(0),y(0))}}{\sqrt{\beta T}} + \frac{\sqrt{2V_{2}(x(0),y(0))}}{\sqrt{\beta T}}\notag\\
&= \sqrt{\frac{2V_{1}(x(0),y(0))}{\beta T}} +  \sqrt{\frac{\gamma\beta\alpha \|\nabla_{x}f(x(0),y(0))\|^2}{\beta T}}\notag\\
&= \frac{L_{xy} \sqrt{2V_{1}(x(0),y(0))}}{\sqrt{T}} +  \frac{2L_{xy} \|\nabla_{x}f(x(0),y(0))\|}{\sqrt{\mu_{x} T}}\cdot
\end{align*}
\end{proof}


\section{Proofs of Technical Lemmas}
In this section, we present the analysis of all technical lemmas in the previous sections.

\subsection{Proof of Lemma \ref{two-sided-PL:lem:V12_dot}}\label{analysis:proof:two-sided-PL:lem:V12_dot}
\begin{proof}
For convenience, we denote by $y^{\star}(x) = \Pcal_{\Ycal^{\star}(x)}[y]$, where recall that $\Ycal^{\star}(x)$ is the solution set of $\max_{y}f(x,y)$ for a given $x$. We first show \eqref{two-sided-PLlem:V12_dot:ineq1}. The time derivative of $V_{1}$ defined in \eqref{two-sided-PLLyapunov_V1} over the trajectory $\xdot$ in \eqref{alg:GDAD} is given as
\begin{align}
\dot{V}_{1}(x) &= \frac{d}{dt}V_{1}(x) = \nabla_{x} f(x,y^{\star}(x))\xdot = -\alpha\langle\nabla_{x} f(x,y^{\star}(x)),\nabla_{x} f(x,y) \rangle\notag\\
&= -\alpha\|\nabla_{x} f(x,y^{\star}(x))\|^2 -\alpha\langle\nabla_{x} f(x,y^{\star}(x)),\nabla_{x} f(x,y) -  \nabla_{x} f(x,y^{\star}(x))\rangle\notag\\
&\leq -\frac{\alpha}{2}\|\nabla_{x} f(x,y^{\star}(x))\|^2 + \frac{\alpha}{2}\|\nabla_{x} f(x,y) -  \nabla_{x} f(x,y^{\star}(x))\|^2\label{two-sided-PLlem:V12_dot:Eq1}\\
&\leq -\frac{\alpha}{2}\|\nabla_{x} f(x,y^{\star}(x))\|^2 + \frac{L_{xy}^2\alpha}{2}\|y-y^{\star}(x)\|^2\notag\\
&\leq -\frac{\alpha}{2}\|\nabla_{x} f(x,y^{\star}(x))\|^2 +  \frac{L_{xy}^2\alpha}{\mu_{y}}(\max_{y}f(x,y) - f(x,y))\notag,
\end{align}
where the first and second inequalities are due to the Cauchy-Schwarz inequality and Assumption \ref{assump:Lipschitz_cont}, respectively. In addition, the last inequality is due to \eqref{eq:QG}.

Next we show \eqref{two-sided-PLlem:V12_dot:ineq2}. Indeed, using \eqref{two-sided-PLlem:V12_dot:Eq1} we have
\begin{align*}
\dot{V}_{2}(x,y) &= \nabla_{x}f(x,y^{\star}(x))\xdot -\nabla_{x}f(x,y)\xdot - \nabla_{y}f(x,y)\ydot\\  
&\leq  -\frac{\alpha}{2}\|\nabla_{x} f(x,y^{\star}(x))\|^2 + \frac{\alpha}{2}\|\nabla_{x} f(x,y) -  \nabla_{x} f(x,y^{\star}(x))\|^2  - \beta\|\nabla_{y}f(x,y)\|^2  \notag\\ 
&\quad+ \alpha\|\nabla_{x}f(x,y)\|^2\notag\\
&\leq -\frac{\alpha}{2}\|\nabla_{x} f(x,y^{\star}(x))\|^2  - \beta\|\nabla_{y}f(x,y)\|^2 + \frac{\alpha}{2}\|\nabla_{x} f(x,y) -  \nabla_{x} f(x,y^{\star}(x))\|^2 \notag\\ 
&\quad + 2\alpha\|\nabla_{x}f(x,y) - \nabla_{x} f(x,y^{\star}(x))\|^2 + 2\alpha \|\nabla_{x} f(x,y^{\star}(x))\|^2\notag\\
&=  - \beta\|\nabla_{y}f(x,y)\|^2 + \frac{3\alpha}{2} \|\nabla_{x} f(x,y^{\star}(x))\|^2+ \frac{5\alpha}{2}\|\nabla_{x} f(x,y) -  \nabla_{x} f(x,y^{\star}(x))\|^2\notag\\
&\leq - \beta\|\nabla_{y}f(x,y)\|^2 + \frac{3\alpha}{2} \|\nabla_{x} f(x,y^{\star}(x))\|^2+ \frac{5L_{xy}^2\alpha}{\mu_{y}}(\max_{y}f(x,y) - f(x,y)),
\end{align*}
where the last inequality we use Assumption \ref{assump:Lipschitz_cont} and \eqref{eq:QG}, similar to the last inequality in $\dot{V}_{1}$ above.
\end{proof}

\subsection{Proof of Lemma \ref{One_sided_PL:lem:V12_dot}}\label{analysis:proof:one-sided-PL:lem:V12_dot}
\begin{proof}
For convenience, we denote by $y^{\star}(x) = \Pcal_{\Ycal^{\star}(x)}[y]$, where recall that $\Ycal^{\star}(x)$ is the solution set of $\max_{y}f(x,y)$ for a given $x$. We first show \eqref{One_sided_PL:lem:V12_dot:ineq1}. The time derivative of $V_{1}$ defined in \eqref{One_sided_PL:Lyapunov_V1} over the trajectory $\xdot$ in \eqref{alg:GDAD} is given as
\begin{align*}
\dot{V}_{1}(x(t),y(t)) &= \frac{d}{dt}V_{1}(x(t),y(t)) = \nabla_{x}f(x(t),y(t))\xdot + \nabla_{y}f(x(t),y(t))\ydot\\ 
&= -\alpha\|\nabla_{x}f(x(t),y(t))\|^2 + \beta\|\nabla_{y}f(x(t),y(t))\|^2.
\end{align*}
Next we show \eqref{One_sided_PL:lem:V12_dot:ineq2}. Indeed, using \eqref{alg:GDAD} we have
\begin{align*}
\dot{V}_{2}(x(t),y(t)) &= \nabla_{x}f(x(t),y^{\star}(x(t)))\xdot -\nabla_{x}f(x(t),y(t))\xdot - \nabla_{y}f(x(t),y(t))\ydot\\  
&=  -\alpha\langle\nabla_{x} f(x(t),y^{\star}(x(t))),\nabla_{x} f(x(t),y(t)) \rangle   - \beta\|\nabla_{y}f(x(t),y(t))\|^2   \notag\\ 
&\quad+ \alpha\|\nabla_{x}f(x(t),y(t))\|^2\notag\\
&= -\alpha\|\nabla_{x} f(x(t),y(t)) \|^2   - \beta\|\nabla_{y}f(x(t),y(t))\|^2+ \alpha\|\nabla_{x}f(x(t),y(t))\|^2\notag\\ 
&\quad  -\alpha\langle\nabla_{x} f(x(t),y^{\star}(x(t)))-\nabla_{x} f(x(t),y(t)) ,\nabla_{x} f(x(t),y(t)) \rangle \notag\\
&\leq - \beta\|\nabla_{y}f(x(t),y(t))\|^2 + \frac{\alpha}{8}\|\nabla_{x} f(x(t),y(t))\|^2\notag\\ 
&\quad + 2\alpha\|\nabla_{x} f(x(t),y^{\star}(x(t)))-\nabla_{x} f(x(t),y(t)) \|^2\notag\\
&\leq - \beta\|\nabla_{y}f(x(t),y(t))\|^2 + \frac{\alpha}{8}\|\nabla_{x} f(x(t),y(t))\|^2 + 2L_{xy}^2\alpha\|y^{\star}(x(t))-y(t) \|^2\notag\\
&\leq  - \beta\|\nabla_{y}f(x(t),y(t))\|^2 + \frac{\alpha}{8}\|\nabla_{x} f(x(t),y(t))\|^2 + \frac{4L_{xy}^2\alpha}{\mu_{y}}V_{2}(x(t),y(t)),
\end{align*}
where the second inequality is due to the Cauchy-Schwarz inequality: $2xy\leq (1/\eta) x^2 + \eta y^2$ for any $\eta > 0$. In addition, the last two inequalities are due to Eqs.\ \ref{assump:Lipschitz_cont:ineq} and \eqref{eq:QG}, respectively.
\end{proof}

\subsection{Proof of Lemma \ref{stronglyconcave:lem:V12_dot}}\label{analysis:proof:stronglyconcave:lem:V12_dot}
\begin{proof}
We first show \eqref{stronglyconcave:lem:V12_dot:ineq1}. Indeed, using \eqref{alg:GDAD}
\begin{align*}
\dot{V}_{1}(x(t),y(t)) &= \frac{d}{dt}V_{1}(x(t),y(t)) = \nabla_{x}f(x(t),y(t))\xdot + \nabla_{y}f(x(t),y(t))\ydot\\ 
&= -\frac{1}{\alpha}\|\xdot(t)\|^2 + \frac{1}{\beta}\|\ydot(t)\|^2.        
\end{align*}
Next, we show \eqref{stronglyconcave:lem:V12_dot:ineq2}. Using \eqref{alg:GDAD} and \eqref{stronglyconcave:Lyapunov:V2} we consider
\begin{align*}
\dot{V}_{2}(x(t),y(t)) &= \beta\langle \ydot(t),\frac{d}{dt}\nabla_{y}f(x,y) \rangle\\
&= \beta\langle \ydot(t), \nabla_{yx}f(x(t),y(t))\xdot(t) + \nabla_{yy}f(x(t),y(t))\ydot(t)\rangle\\
&= \beta\langle \ydot(t), \nabla_{yx}f(x(t),y(t))\xdot(t)\rangle + \beta\langle \ydot(t), \nabla_{yy}f(x(t),y(t))\ydot(t)\rangle\\
&\leq L_{xy}\beta\|\ydot(t)\|\|\xdot(t)\| - \mu_{y}\beta\|\ydot(t)\|^2,   
\end{align*}
where in the last inequality we use Assumptions \ref{assump:Lipschitz_cont} and \ref{assump:stronglyconcave} to have
\begin{align*}
\|\nabla_{yx}f(x(t),y(t))\| \leq L_{xy}\quad \text{and}\quad \nabla_{yy}f(x(t),y(t)) \leq -\mu_{y}\Ibf,\;\text{where } \Ibf \text{ is the identity matrix}.      
\end{align*}
\end{proof}

\subsection{Proof of Lemma \ref{stronglyconvex:lem:V12_dot}}\label{analysis:proof:stronglyconvex:lem:V12_dot}
\begin{proof}
We first show \eqref{stronglyconvex:lem:V12_dot:ineq1}. Using \eqref{alg:GDAD} and \eqref{stronglyconvex:Lyapunov:V1} we consider
\begin{align*}
\dot{V}_{1}(x(t),y(t)) &= -\alpha\langle \xdot(t),\frac{d}{dt}\nabla_{x}f(x(t),y(t)) \rangle\\
&= -\alpha\langle \xdot(t), \nabla_{xx}f(x(t),y(t))\xdot(t) + \nabla_{xy}f(x(t),y(t))\ydot(t)\rangle\\
&= -\alpha\langle \xdot(t), \nabla_{xx}f(x(t),y(t))\xdot(t)\rangle - \alpha\langle \xdot(t), \nabla_{xy}f(x(t),y(t))\ydot(t)\rangle\\
&\leq - \mu_{x}\alpha\|\xdot(t)\|^2 + L_{xy}\alpha\|\ydot(t)\|\|\xdot(t)\| ,   
\end{align*}
where in the last inequality we use Assumptions \ref{assump:Lipschitz_cont} and \ref{assump:stronglyconvex} to have
\begin{align*}
\|\nabla_{xy}f(x(t),y(t))\| \leq L_{xy}\quad \text{and}\quad \nabla_{xx}f(x(t),y(t)) \geq \mu_{x}\Ibf,\;\text{where } \Ibf \text{ is the identity matrix}.      
\end{align*} 
Next, we show  \eqref{stronglyconvex:lem:V12_dot:ineq2}. Indeed, using \eqref{alg:GDAD}
\begin{align*}
\dot{V}_{2}(x(t),y(t)) &= \frac{d}{dt}V_{2}(x(t),y(t)) = -\nabla_{x}f(x(t),y(t))\xdot - \nabla_{y}f(x(t),y(t))\ydot\\ 
&= \frac{1}{\alpha}\|\xdot(t)\|^2 - \frac{1}{\beta}\|\ydot(t)\|^2.        
\end{align*}
\end{proof}

\section{Concluding Remarks}
In this paper, we consider two-time-scale gradient descent-ascent dynamics for solving nonconvex min-max optimization problems. Our main focus is to derive the convergence rates of this method for different settings of the underlying objective functions. Our techniques are mainly motivated by the classic singular perturbation, where we show that our analysis improves the existing results under the same conditions. A natural extension from this work  is to provide a better analysis for the discrete-time variant of GDAD. Another interesting future direction is to consider the stochastic setting and its accelerated counterpart.   

\bibliographystyle{IEEEtran}
\bibliography{refs}

\end{document}